\newcommand{\Z}{\mathbb{Z}}
\newcommand{\N}{\mathbb{N}}
\newcommand{\R}{\mathbb{R}}
\newcommand{\GG}{\Gamma_d(q)}
\newcommand\g{\left( \begin{array}{cc} \Pi_{k=1}^{d-1} (t+l_k)^{m_k} & R \\ 0 & 1 \end{array} \right)}
\newcommand\m{(m_1,m_2, \cdots ,m_{d-1})}
\newcommand\LL{\mathcal L}
\newcommand{\MM}{M_{n,\epsilon}}
\newtheorem{theorem}{Theorem}
\newtheorem{lemma}[theorem]{Lemma}
\newtheorem{prop}[theorem]{Proposition}
\newtheorem{defn}{Definition}
\newcommand{\ou}{\overline{u}}
\newcommand{\ov}{\overline{v}}
\title{Higher rank lamplighter groups are graph automatic}
\author[Sophie B\'{e}rub\'{e}] {Sophie B\'{e}rub\'{e}}
\address{Department of Biostatistics, Johns Hopkins Bloomberg School of Public Health, Baltimore, MD 21205}
\email{sberube3@jhmi.edu}
\author[Tara Palnitkar] {Tara Palnitkar}
\address{Department of Mathematics, University of Minnesota, Minneapolis, MN 55455}
\email{palni003@umn.edu}
\author[Jennifer Taback] {Jennifer Taback}
\address{Department of Mathematics, Bowdoin College, Brunswick, ME 04011}
\email{jtaback@bowdoin.edu}
\thanks{The third author acknowledges support from
National Science Foundation grant DMS-1105407 and Simons Foundation grant 31736 to Bowdoin College. The authors would like to thank Murray Elder for helpful conversations during the writing of this paper.}
\keywords{automatic group, (Cayley) graph automatic group, higher rank lamplighter group, Diestel-Leader group}
\date{\today}
\begin{document}

\begin{abstract}
We show that the higher rank lamplighter groups, or Diestel-Leader groups $\Gamma_d(q)$ for $d \geq 3$, are graph automatic.   As these are not automatic groups, this introduces a new family of graph automatic groups which are not automatic.
\end{abstract}

\maketitle

\section{Introduction}

Automatic groups were introduced in \cite{WordProc} by Cannon, Gilman, Epstein, Holt and Thurston, motivated by initial observations of Cannon and Thurston about hyperbolic groups and their geometry.  Their goal was first to understand fundamental groups of compact 3-manifolds, and then to streamline computation in these groups.
For example, if $G$ is an automatic group, then its Dehn function is at most quadratic, the word problem can be solved in quadratic time, and the automatic structure can be used to reduce any word in the generating set to a normal form for that group element, also in quadratic time.

A finitely generated group $G$ has an automatic structure with respect to a generating set $S$ if there is a regular language of normal forms for elements of $G$, and, for each $s \in S$, a finite state automaton which recognizes multiplication by $s$.  The class of automatic groups includes all finite groups, braid groups, Coxeter groups, hyperbolic groups and mapping class groups, among others. It is shown in \cite{WordProc} that if $G$ has an automatic structure with respect to one generating set, then it has an automatic structure with respect to any generating set.  All automatic groups are finitely presented, and there are geometric conditions which can be used to show that a set of normal forms constitutes the basis of an automatic structure.  For a comprehensive introduction to automatic groups, see \cite{WordProc}, or for a shorter treatment,  \cite{farb}, \cite{holt} or \cite{NS}.

It is unsatisfying that many groups which have nice algorithmic properties, including the properties listed above, are not automatic.  For instance, a finitely generated nilpotent group is automatic if and only if it is virtually abelian.  In \cite{KKM},  Kharlampovich, Khoussainov, and Miasnikov extend the definition of an automatic group to a {\em (Cayley) graph automatic group}, in which the language of normal forms representing group elements is defined over a finite alphabet of symbols.  If one takes the symbol alphabet to be the generating set for the group, then the definition of an automatic group is recovered.  Graph automatic groups retain many of the computational advantages of automatic groups, and this enlarged class includes the solvable Baumslag-Solitar groups $BS(1,n)$, the lamplighter groups $\Z_n \wr \Z$, the metabelian groups $\Z^n \ltimes_A \Z$ for $A \in SL_n(\Z)$, and all finitely generated groups of nilpotency class at most $2$, among others. \cite{KKM}  It is shown in \cite {BK} that the non-solvable Baumslag-Solitar groups $BS(m,n)$ are also graph automatic.

In this paper, prove the following theorem.
\begin{theorem}
\label{thm:main}
The Diestel-Leader groups $\GG$ for $d \geq 3$ are graph automatic.
\end{theorem}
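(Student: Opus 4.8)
The plan is to work in the realization $\GG\cong\LL\rtimes\Z^{d-1}$, where $\LL=\F_q[t,(t+l_1)^{-1},\dots,(t+l_{d-1})^{-1}]$ for distinct $l_1,\dots,l_{d-1}\in\F_q$ and $\m\in\Z^{d-1}$ acts on $\LL$ by multiplication by $u^{\m}:=\prod_{k=1}^{d-1}(t+l_k)^{m_k}\in\LL$; I write a group element as $g=(R,\m)$ with $R\in\LL$, with the usual semidirect-product multiplication, and set $\tau_k:=(0,e_k)$ for $e_k$ the $k$-th standard basis vector of $\Z^{d-1}$. I represent $R$ by its partial-fractions $\F_q$-expansion in the basis $\{t^i:i\ge0\}\cup\{(t+l_k)^{-j}:j\ge1,\ 1\le k\le d-1\}$, pictured as a ``spider'' with legs $\mathcal P_0,\dots,\mathcal P_{d-1}$ meeting at an origin ($\mathcal P_0$ the polynomial leg, $\mathcal P_k$ the leg of the $(t+l_k)^{-j}$), so that $R$ is a finitely supported $\F_q$-colouring of the spider. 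The structural fact I will exploit is that, when every leg is scanned from its far end inward toward the origin, multiplication of a colouring by $(t+l_k)^{\pm1}$ is a bounded-delay finite-state operation: on the individual legs it is a shift, a length-two convolution, or a one-term linear recursion, with bounded rewriting near the origin; the one point needing the particular scanning order is that multiplication by $(t+l_k)^{-1}$ along $\mathcal P_0$ is synthetic division, which is top-down and hence a left-to-right transduction carrying only a single field element.

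The crux is the choice of normal form. Right multiplication by the evident lamp generator $(1,\mathbf 0)$ sends $(R,\m)$ to $(R+u^{\m},\,\m)$, and $u^{\m}$ is \emph{not} a single basis element: in the partial-fractions basis it occupies a part of the legs of radius comparable to $\|\m\|_1$, which is unbounded in $\m$, so this operation is not directly realizable by a synchronous two-tape automaton. The remedy is to record $g=(R,\m)$ by the pair $(\m,\widetilde R)$ with $\widetilde R:=u^{-\m}R\in\LL$---equivalently, to factor $g=\tau_1^{m_1}\cdots\tau_{d-1}^{m_{d-1}}\cdot(\widetilde R,\mathbf 0)$---which conjugates the lamp back to the origin. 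Then right multiplication by $(c,\mathbf 0)$ becomes $\widetilde R\mapsto\widetilde R+c$, i.e.\ recolour the single origin cell, and right multiplication by $\tau_k^{\pm1}$ becomes $\m\mapsto\m\pm e_k$ together with $\widetilde R\mapsto (t+l_k)^{\mp1}\widetilde R$, one of the bounded-delay colouring operations above. As a finite generating set I take $\{\tau_1^{\pm1},\dots,\tau_{d-1}^{\pm1}\}\cup\{(c,\mathbf 0):c\in\mathcal B\}$ for an $\F_p$-basis $\mathcal B$ of $\F_q$; conjugating the $(c,\mathbf 0)$ by the $\tau_k$ produces every monomial of $\LL$ times every $c$, and these $\F_p$-span $\LL$, so this set generates $\GG$.

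Finally I let $L$ be the language of words encoding $\m$ as $d-1$ signed binary integers and encoding $\widetilde R$ by the list of its partial-fractions coefficients written leg by leg from the far ends inward, with the outermost run of zeros on each leg deleted so that every group element has exactly one representative; with a fixed layout (position block, a separator, the $d$ leg blocks) $L$ is a product of finitely many regular languages, hence regular, $\pi\colon L\to\GG$ is a bijection, and the equality relation on $L$ is the diagonal. For each generator $s$ one then builds a synchronous two-tape automaton recognising $\{(u,v)\in L\times L:\pi(u)\,s=\pi(v)\}$: for $s=(c,\mathbf 0)$ the two words agree off the origin symbol; for $s=\tau_k^{\pm1}$ the position blocks differ by a binary $\pm1$ (a carry computation on digits read least-significant-first), the leg blocks are related by the bounded-delay transduction for multiplication by $(t+l_k)^{\mp1}$, and an $O(1)$-lookahead pass re-trims the outermost zeros, creating or deleting a single outermost cell of $\mathcal P_0$ and of $\mathcal P_k$ as needed. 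Since every generator changes the support radius of $\widetilde R$ and the length of the position block by $O(1)$, the two words differ in length by $O(1)$ and their alignment is kept by bounded buffering. The fiddly step---which I expect to absorb most of the work---is running the synthetic-division transducer, the transfers among the legs near the origin (where the new value on $\mathcal P_k$ depends on data from \emph{all} the other legs, so that part of the check must be deferred to the end of the scan), the binary carry, and the re-trim all in lock-step on a single pair of tapes. Once these automata are in hand, $(L,\pi)$ is a graph automatic structure for $\GG$, which proves Theorem~\ref{thm:main}; the fact that $\GG$ is not automatic for $d\ge3$, so that this produces new examples of graph automatic groups, plays no role in the argument.
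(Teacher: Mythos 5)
Your proposal is correct and is essentially the paper's argument: the re-centered polynomial $\widetilde R = u^{-\m}R$ is exactly the paper's $R' = \Pi_{k}(t+l_k)^{-m_k}R$, your partial-fractions ``spider'' is the paper's Decomposition Lemma applied to $R'$, and the globally-accumulated near-origin coefficient on the distinguished leg --- the step you rightly flag as absorbing most of the work --- is precisely what the paper's Section~7 automaton handles by threading a running partial sum through its state indices and deferring the comparison to the end of the scan. The only deviations are cosmetic: you use the atomic shift-plus-lamp generating set $\{\tau_k^{\pm1}\}\cup\{(c,\mathbf 0)\}$ rather than the Bartholdi--Neuhauser--Woess set $S_{d,q}$ (permissible, since graph automaticity is generating-set-independent and each BNW generator is a bounded word in yours, so this merely splits each BNW multiplier into simpler pieces), you encode $\m$ in binary rather than unary, and you scan the legs from the far ends inward rather than outward from the origin; each choice works, and none changes the substance of the construction.
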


The family of Diestel-Leader groups $\Gamma_d(q)$ for $d \geq 3$, or higher rank lamplighter groups, was introduced formally in \cite{BNW} by Bartholdi, Neuhauser and Woess.  These groups are not automatic, as they are type $F_{d-1}$ but not $F_d$ when $d \geq 3$, and automatic groups are of type $FP_{\infty}$.  These groups are defined explicitly in Section \ref{sec:DL} below, and their metric properties are studied by the third author and Stein in \cite{ST}.  Kevin Wortman has sketched a proof showing that arguments analogous to those of Gromov in \cite{G} imply that  the Dehn function of $\GG$ is quadratic regardless of the values of $d \geq 3$ and $q$.  It was shown in \cite{CT} that when $p$ is prime,  $\Gamma_3(p)$ is a cocompact lattice in $Sol_5 \left({\mathbb F}_p((t))
\right)$, and its Dehn function is quadratic.  The Dehn function of $\Gamma_3(m)$ is studied for any $m$  in \cite{KR}
where it is shown to be at most quartic.

The Cayley graph of the Diestel-Leader group $\GG$, with respect to a certain generating set is a {\em Diestel-Leader graph}, a particular subset of a product of $d$ infinite trees of valence $q+1$.  More general Diestel-Leader graphs were introduced in \cite{DL}
as a potential answer to the question ``Is any connected, locally finite, vertex transitive graph quasi-isometric to the Cayley graph of a
finitely generated group?"   The Diestel-Leader graph which is a subset of a product of two infinite trees of differing valence is not quasi-isometric to the Cayley graph of any finitely generated group, as shown by Eskin, Fisher and Whyte in \cite{EFW}.  The Cayley graph of the well-known lamplighter group $L_q = \Gamma_2(q) = \Z_q \wr \Z$, with respect to a natural generating set, is the Diestel-Leader graph contained in a product of two infinite trees of valence $q+1$.  In this sense we view the Diestel-Leader groups as a geometric higher rank generalization of the lamplighter groups.

\section{Diestel-Leader Groups}
\label{sec:DL}

We briefly introduce the Diestel-Leader groups $\GG$ and their geometry, and refer the reader to \cite{BNW}, \cite{ST} and \cite{STW} for a more comprehensive treatment.  The Diestel-Leader graph $DL_d(q)$ is the subset of the product of $d$ infinite regular trees $T_1,T_2, \cdots ,T_d$, each with valence $q+1$ and a height function $h_i:T_i \rightarrow \R$, consisting of the vertices for which the sum of the heights of the coordinates is equal to zero.  Two vertices are connected by an edge if and only if they are identical in all but two coordinates, and in those two coordinates, say $i$ and $j$, the entries differ by an edge in $T_i$ or $T_j$.

Bartholdi, Neuhauser and Woess in \cite{BNW} present a matrix group $\GG$ with a particular generating set $S_d(q)$  so that the Cayley graph $\Gamma(\GG,S_d(q))$ is exactly the Diestel-Leader graph $DL_d(q)$.  Their construction relies on the arithmetic condition that $d-1<p$ for all prime divisors $p$ of $q$. Specifically, let $\mathcal{L}_q$ be a commutative ring of order $q$ with multiplicative
unit 1, and suppose $\mathcal{L}_q$ contains distinct elements $l_1, \dots, l_{d-1}$ such that if $d\geq 3$, their pairwise differences are invertible.  In this paper, we additionally assume that $l_i$ is also invertible, for $1 \leq i \leq d$.  These conditions are easily satisfied, for example, in $\Z_q$ for large enough $q$.

Define a ring of polynomials in the formal variables $t$ and $(t+l_i)^{-1}$ for $1 \leq i \leq d-1$ with
finitely many nonzero coefficients lying in ${\mathcal L}_q$:
$${\mathcal R}_d({\mathcal L}_q) = {\mathcal L}_q[t,(t+l_1)^{-1},(t+l_2)^{-1}, \cdots ,(t+l_{d-1})^{-1}].$$

The Diestel-Leader group constructed in \cite{BNW} is the group of affine matrices of the
form
\begin{equation}\label{eqn:g}
\left( \begin{array}{cc} (t+l_1)^{m_1} \cdots (t+l_{d-1})^{m_{d-1}} & P \\ 0 & 1 \end{array} \right), \text{ with }
m_1,m_2, \cdots ,m_{d-1} \in \Z \text{ and }P \in {\mathcal R}_d({\mathcal L}_q),
\end{equation}
which has Cayley graph $DL_d(q)$ with respect to the generating set $S_{d,q}$ consisting of the matrices
$$\left( \begin{array}{cc} t+l_i & b \\ 0 & 1 \end{array} \right)^{\pm 1}, \text{ with } b \in {\mathcal L}_q, \ i \in
\{1,2, \cdots ,d-1\} \text{ and }$$
$$ \left( \begin{array}{cc} (t+l_i)(t+l_j)^{-1} & -b(t+l_j)^{-1} \\ 0 & 1 \end{array} \right), \text{ with } b \in {\mathcal
L}_q, \ i,j \in \{1,2, \cdots ,d-1\}, \ i \neq j.$$

We refer to a matrix of the form  $\left( \begin{array}{cc} t+l_i & b \\ 0 & 1 \end{array} \right)$ as a {\em type 1} generator and a matrix of the form $\left( \begin{array}{cc} (t+l_i)(t+l_j)^{-1} & -b(t+l_j)^{-1} \\ 0 & 1 \end{array} \right)$ as a {\em type 2} generator.  Without loss of generality, we assume that in a type 2 generator, $i<j$.

An element $g \in \GG$ is uniquely defined by a $(d-1)$-tuple of integers $(m_1,m_2, \cdots ,m_{d-1})$ which determine the upper left entry of $g$ and a polynomial $P \in {\mathcal R}_d({\mathcal L}_q)$.  The identification between a group element and a vertex in the Diestel-Leader graph $DL_d(q)$ is based on this information as well, and is explicitly described in \cite{BNW} as well as \cite{STWarxiv}, the extended version of \cite{STW}.  Roughly, each tree is associated with one of the defining variables in ${\mathcal R}_d({\mathcal L}_q)$, and vertices in that tree are assigned equivalence classes of polynomials in that variable.  To find the coordinate in the tree $T_i$ corresponding to $g \in \GG$ of the above form, we compute the Laurent polynomial
$${\mathcal LS}_i((t+l_1)^{-k_1} \cdots (t+l_{d-1})^{-k_{d-1}} P)$$
and consider only those terms of negative degree, or non positive degree when $i=d$.  To show this is well defined we refer to the following lemma, proven in \cite{STW}.  The proof relies on rewriting $R$ as a Laurent polynomial in each of the possible variables.

\begin{lemma}[Decomposition Lemma]
\label{lemma:decomp}
Let $$Q \in {\mathcal R}_d({\mathcal L}_q)={\mathcal L}_q [ (t+l_1)^{-1},(t+l_2)^{-1}, \cdots , (t+l_{d-1})^{-1},t ]$$   where the $l_i \in {\mathcal L}_q$ are chosen so that $l_i-l_j$ is invertible whenever $i \neq j$. Then $Q$ can be written uniquely as $P_1(Q) + P_2(Q) + \cdots + P_d(Q)$ where
\begin{enumerate}
\item[(a)] for $1 \leq i \leq d-1$ we have that $P_i(Q)$ is a polynomial in $t+l_i$ all of whose terms have negative degree, and
\item[(b)] for $i=d$ we have that $P_d(Q)$ is a polynomial in $t^{-1}$ all of whose terms have non-positive degree.
\end{enumerate}
\end{lemma}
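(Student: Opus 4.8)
The statement is a partial-fraction decomposition, and the plan is to establish existence by clearing denominators and applying the Chinese Remainder Theorem inside the polynomial ring $\mathcal{L}_q[t]$, and then establish uniqueness by extracting canonical ``principal parts'' in suitable rings of formal Laurent series. The only nontrivial input is the hypothesis that $l_i-l_j\in\mathcal{L}_q^{\times}$ whenever $i\neq j$: since $(t+l_i)-(t+l_j)=l_i-l_j$, this is precisely what makes the principal ideals $\big((t+l_i)^{k}\big)$ of $\mathcal{L}_q[t]$ pairwise comaximal, and comaximality is what lets the usual rational-function manipulations go through even though $\mathcal{L}_q$ is merely a finite commutative ring rather than a field.

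For existence, clear denominators to write $Q=N(t)\cdot\prod_{i=1}^{d-1}(t+l_i)^{-k_i}$ with $N\in\mathcal{L}_q[t]$ and each $k_i\geq 0$. By comaximality, the Chinese Remainder Theorem produces $S\in\mathcal{L}_q[t]$ and $A_i\in\mathcal{L}_q[t]$ with $\deg A_i<k_i$ such that $Q=S+\sum_{i=1}^{d-1}A_i/(t+l_i)^{k_i}$. For each $i\leq d-1$, the substitution $u=t+l_i$ together with $\deg_u A_i<k_i$ rewrites $A_i/(t+l_i)^{k_i}$ as a polynomial in $t+l_i$ all of whose terms have negative degree; this is $P_i(Q)$. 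The remaining summand $S$ is the part of $Q$ with no pole at any $-l_i$ — the data recording the $T_d$-coordinate — and is $P_d(Q)$. One may also avoid the Chinese Remainder Theorem and argue by induction on the number of distinct linear factors in the denominator, using only the two-term identity $\tfrac{1}{(t+l_i)(t+l_j)}=\tfrac{1}{l_j-l_i}\big(\tfrac{1}{t+l_i}-\tfrac{1}{t+l_j}\big)$ to peel the denominator apart, followed by the finite binomial expansion $t^{a}=\big((t+l_i)-l_i\big)^{a}$ to separate negative from non-negative powers of $t+l_i$; the hypothesis on $l_i-l_j$ enters only through the first identity.

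For uniqueness it suffices to show that each summand is determined by $Q$ alone. Fix $i\leq d-1$ and embed $\mathcal{R}_d(\mathcal{L}_q)$ into the formal Laurent series ring $\mathcal{L}_q((t+l_i))$; there $t$ and every $(t+l_j)^{-1}$ with $j\neq i$ expand as power series with no terms of negative degree, so the negative-degree part of the image of $Q$ must equal the image of $P_i(Q)$ for any valid decomposition, and hence $P_i(Q)$ is canonical. The analogous expansion in the variable attached to $T_d$ pins down $P_d(Q)$, after which uniqueness of the whole decomposition is immediate. The step I expect to require the most care is this last one over a general ring: ``order of a pole'' is not meaningful when $\mathcal{L}_q$ is not a domain, so the principal-part extraction must be phrased through the formal power series rings $\mathcal{L}_q[[u]]$ (which are harmless for any commutative ring), and one must be careful about how the constant term of $Q$ is allotted among the summands so that the resulting decomposition is genuinely unique rather than unique only up to an element of $\mathcal{L}_q$.
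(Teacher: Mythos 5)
Your argument is correct, and it takes a partly different route from the one the paper cites. The paper does not actually prove Lemma~\ref{lemma:decomp}; it refers to \cite{STW}, and the only hint it gives is that the proof ``relies on rewriting $R$ as a Laurent polynomial in each of the possible variables,'' i.e.\ on the expansions recorded in Equations~\eqref{eqn:1}--\eqref{eqn:3}. Your uniqueness argument is exactly that mechanism: embed $\mathcal{R}_d(\mathcal{L}_q)$ into $\mathcal{L}_q((t+l_i))$ and observe that $P_i(Q)$ must be the negative-degree part of the image of $Q$, since $t=(t+l_i)-l_i$ and each $(t+l_j)^{-1}$ with $j\neq i$ expand as power series in $t+l_i$ with no negative terms (and likewise $P_d$ is the part of nonpositive degree in the expansion in $t^{-1}$, which is just the ordinary polynomial part of $Q$). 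Your existence argument, by contrast, is algebraic rather than expansion-based: clear denominators, peel the denominator apart via the comaximality of the $(t+l_i)^{k_i}$ (either by CRT or, more cleanly, by iterating the two-term Bezout identity followed by monic division by $(t+l_i)^{k_i}$ to force $\deg A_i<k_i$), and then convert each $A_i/(t+l_i)^{k_i}$ into a Laurent polynomial of strictly negative degree in $t+l_i$. Both routes work; the CRT/Bezout existence proof is shorter to state and isolates exactly where the hypothesis is used, while the Laurent-expansion route that \cite{STW} sketches has the advantage of being constructive in a way that directly feeds the computations with $\mathcal{LS}_i$ used throughout the rest of the paper.

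Two small remarks. First, the invertibility of $l_i-l_j$ is used in your uniqueness step as well, not only in existence: it is what makes $(t+l_j)=(l_j-l_i)\bigl(1+(l_j-l_i)^{-1}(t+l_i)\bigr)$ a unit in $\mathcal{L}_q[[t+l_i]]$, so that $(t+l_j)^{-1}$ really does expand as a power series with no pole. Second, the worry you flag at the end about the constant term being ``unique only up to an element of $\mathcal{L}_q$'' is resolved by the degree conventions themselves: each $P_i(Q)$ with $i\leq d-1$ is required to have all terms of \emph{strictly} negative degree in $t+l_i$, so none of them can absorb a constant, and the constant is forced into $P_d(Q)$ (whose terms have nonpositive degree in $t^{-1}$, so a constant is permitted there and only there). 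This is also visible in your existence construction, since $A_i/(t+l_i)^{k_i}$ with $\deg A_i<k_i$ produces only the powers $(t+l_i)^{-1},\dots,(t+l_i)^{-k_i}$ and no constant term.
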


While we will not explicitly use the identification between a group element and the corresponding vertex in the Cayley graph in this paper, we will use the Decomposition Lemma repeatedly.  That is, for $g \in \GG$ as in Equation \eqref{eqn:g}, we will decompose a polynomial related to $P$ using the Decomposition Lemma, and use the component polynomials as the basis of our graph automatic structure.  In our analysis of the relationship between $gs$ and $g$, for $s \in S_d(q)$, we expand the upper right entry of the product $gs$ using the same techniques.

It is easily verified that the following combinatorial formulae allow us to rewrite polynomials in $t+l_r$ in terms of $t+l_s$ for $i \neq j$, and $t^{-1}$:
\begin{equation}\label{eqn:1}
(t+l_r)^k = \sum_{n=0}^{\infty} { k \choose n} (l_r-l_s)^{k-n}(t+l_s)^n
\end{equation}
and
\begin{equation}\label{eqn:2}
(t+l_r)^k =  \sum_{n=-k}^{\infty} { k \choose - n}l_i^{n+k}t^{-n}
\end{equation}
for any $k \in \Z$.  Moreover, when $k$ is nonnegative, we write $t^k$ as a polynomial in $t+l_s$ as follows:
\begin{equation}\label{eqn:3}
t^k = \sum_{n=0}^{k} {k \choose n} (-l_s)^{k-n}(t+l_s)^n.
\end{equation}

In the proofs below, we repeatedly use Equations \eqref{eqn:1} and \eqref{eqn:2} with a fixed value of $r \in \{1,2, \cdots ,d-1\}$ and $k=-1$ to rewrite $(t+l_r)^{-1}$ in terms of $t+l_s$ or $t^{-1}$.  In the first case, write
\begin{equation}\label{eqn:1a}
(t+l_r)^{-1} = \sum_{n=0}^{\infty} \alpha_n (t+l_s)^n
\end{equation}
where $\alpha_n = {-1 \choose n} (l_r-l_s)^{-(n+1)}=(-1)^n (l_r-l_s)^{-(n+1)}$.  Writing $C_{r,s} =(l_r-l_s)^{-1}$ for $1 \leq s \leq d-1$, this simplifies to $\alpha_n = (-1)^n C_{r,s}^{n+1}$.  Notice that for fixed values of $r$ and $s$, we have $\alpha_{n+1} = -C_{r,s} \alpha_n$.

Similarly, we simplify Equation \eqref{eqn:2} with exponent $-1$ as
\begin{equation}\label{eqn:2a}
(t+l_r)^{-1} = \sum_{n=1}^{\infty} { -1 \choose - n} (l_r)^{n-1}t^{-n} = \sum_{n=1}^{\infty}(-1)^{n-1} (l_r)^{n-1}t^{-n}.
\end{equation}
Denote the coefficients in the above sum as $\alpha_n'=(-1)^{n-1} (l_r)^{n-1}$, and note that $\alpha_{n+1}'=-l_r \alpha_n'$.

We make one assumption about our Diestel-Leader groups to simplify notation throughout this paper.  Namely we take $\LL_q = \Z_q$ and note that all our theorems hold for more general coefficient rings as well.

\section{Graph Automatic Groups}
\label{sec:automatic}

Let $G$ be a group with finite symmetric  generating set $X$, and $\Lambda$ a finite set of symbols.
The number of symbols (letters) in a word $u\in\Lambda^*$ is denoted $|u|_{\Lambda}$. We begin by defining a convolution of group elements, following \cite{KKM} in our notation.

\begin{defn}[convolution]
Let $\Lambda$ be a finite set of symbols, $\diamond$  a symbol not in $\Lambda$,  and let $L_1,\dots, L_k$ be a finite set of languages over $\Lambda$. Set  $\Lambda_{\diamond}=\Lambda\cup\{\diamond\}$. Define the {\em convolution of a tuple} $(w_1,\dots, w_k)\in L_1\times \dots \times L_k$ to be the string $\otimes(w_1,\dots, w_k)$ of length $\max |w_i|_{\Lambda}$ over the alphabet $\left(\Lambda_{\diamond}\right)^k$  as follows.
The $i$th symbol of the string is
\[\left(\begin{array}{c}
\lambda_1\\
\vdots\\
\lambda_k
\end{array}\right)\]
where $\lambda_j$ is the $i$th letter of $w_j$ if $i\leq |w_j|_{\Lambda}$ and $\diamond$ otherwise.
Then  \[\otimes(L_1,\dots, L_k)=\left\{\otimes(w_1,\dots, w_k) \mid w_i\in L_i\right\}.\]
\end{defn}
We note that the convolution of regular languages is again a regular language.

As an example, if $w_1=abb, w_2=bbb$ and $w_3=ba$ then
\[\otimes(w_1,w_2,w_3)=\left(\begin{array}{c}
a\\
b\\
b
\end{array}\right)
\left(\begin{array}{c}
b\\
b\\
a
\end{array}\right)
\left(\begin{array}{c}
b\\
b\\
\diamond
\end{array}\right)\]

When $L_i=\Lambda^*$ for all $i$ the exact definition in \cite{KKM} is recovered.

The definition of a graph automatic group extends that of an automatic group by allowing the normal forms for group elements to be defined over a finite alphabet of symbols.  When this set of symbols is simply taken to be the set of group generators, the definition of an automatic group is recovered.

A set of normal forms for a group may additionally be quasi-geodesic, defined as follows.
\begin{defn}[quasigeodesic normal form]
A  {\em normal form for $(G,X,\Lambda)$} is a set of words $L\subseteq \Lambda^*$ in bijection with $G$. A normal form $L$ is
  {\em quasigeodesic}  if there is a constant $D$ so  that $$|u|_{\Lambda}\leq D(||u||_X+1)$$ for each $u\in L$,  where $||u||_X$ is the length of a geodesic  in $X^*$ for the group element represented by  $u$.
\end{defn}
The $||u||_X+1$ in the definition allows for normal forms where the identity of the group is represented by a nonempty string of length at most $D$.  We denote the image of $u\in L$ under the bijection with $G$ by $\ou$.

The following definition was introduced in \cite{KKM}.

\begin{defn}[graph automatic group] \label{def:graph}
Let $(G,X)$ be a group and finite symmetric generating set, and $\Lambda$ a finite set of symbols.  We say that $(G,X,\Lambda)$ is {\em graph automatic} if there is a regular normal form $L \subset \Lambda^*$, such that for each $x\in X$ the language $L_x=\{\otimes(u,v) \mid u,v\in L,  \ov =_G \ou x\}$ is a regular language.
\end{defn}
The language $L_x$ is often referred to as a {\em multiplier language}.

Any set of normal forms forming the basis of an automatic structure for a group $G$ is automatically quasigeodesic.  The proof of Lemma 8.2 of \cite{KKM} contains  the  observation that graph automatic groups naturally possess a quasigeodesic normal form, and a proof is included in \cite{ETC}.

\begin{lemma}\label{lem:linearnf}Let $G$ be a group with finite generating set $X$.
If $(G,X,\Lambda)$ is graph automatic with respect to the regular normal form $L$, then $L$ is a quasigeodesic normal form.
\end{lemma}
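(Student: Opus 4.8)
The plan is to show that for any graph automatic group, the length of a normal form word is bounded linearly in the word length of the group element it represents. The key idea is a standard ``fellow traveler''-style argument adapted to the graph automatic setting: multiplication by a single generator $x$ changes the normal form only by a bounded amount, as measured by the automaton recognizing $L_x$, and we iterate this along a geodesic.

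First I would fix, for each $x \in X$, the finite state automaton $A_x$ accepting the multiplier language $L_x = \{\otimes(u,v) \mid u,v \in L, \ \ov =_G \ou x\}$, and let $N$ be the maximum number of states over all these finitely many automata. The crucial observation is that if $\otimes(u,v) \in L_x$, then the two words $u$ and $v$ cannot differ too much in length: once one of them has ended, the convolution is feeding the symbol $\diamond$ in that coordinate, and if the remaining word continued for more than $N$ further symbols, the automaton $A_x$ would repeat a state while reading a constant input in that coordinate, so by a pumping argument we could produce another accepted convolution $\otimes(u,v')$ with $\ov' = \ov$ but $v' \neq v$, contradicting that $L$ is in bijection with $G$ (distinct words of $L$ represent distinct elements). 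Hence $\bigl| |u|_{\Lambda} - |v|_{\Lambda} \bigr| \leq N$ whenever $\otimes(u,v) \in L_x$, for every $x \in X$.

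Next I would take an arbitrary $u \in L$, let $g = \ou$, and let $\|u\|_X = n$, so there is a geodesic $x_1 x_2 \cdots x_n$ in $X^*$ with $g = x_1 \cdots x_n$. Writing $g_i = x_1 \cdots x_i$ and letting $u_i \in L$ be the normal form of $g_i$ (with $u_0$ the normal form of the identity), we have $\otimes(u_{i-1}, u_i) \in L_{x_i}$ for each $i$, so $\bigl| |u_{i-1}|_{\Lambda} - |u_i|_{\Lambda}\bigr| \leq N$ by the previous paragraph. Summing the telescoping inequality gives
\[
|u|_{\Lambda} = |u_n|_{\Lambda} \leq |u_0|_{\Lambda} + Nn \leq (|u_0|_{\Lambda} + N)(\|u\|_X + 1),
\]
so setting $D = |u_0|_{\Lambda} + N$ yields $|u|_{\Lambda} \leq D(\|u\|_X + 1)$ for all $u \in L$, which is precisely the quasigeodesic condition.

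The main obstacle is making the pumping argument in the second step fully rigorous: one must be careful that when we ``pump'' a repeated state of $A_x$ we are inserting symbols into the longer of the two words in a way that still yields a syntactically valid convolution (the shorter word having already been padded with $\diamond$), and that the resulting string is genuinely accepted and of the form $\otimes(u, v')$ with $v' \in L$ — this last point uses that $L$ itself is regular, so intersecting with the appropriate regular set keeps us inside $L$, and the bijection between $L$ and $G$ then forces the contradiction. Everything else is routine bookkeeping with the telescoping sum.
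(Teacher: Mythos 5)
Your proposal is correct and is the standard argument for this fact. The paper itself does not reprove Lemma~\ref{lem:linearnf}; it defers to Lemma 8.2 of \cite{KKM} and to \cite{ETC}, which carry out essentially the same two-step argument you give: a pumping argument bounding the length discrepancy $\bigl||u|_\Lambda - |v|_\Lambda\bigr|$ for $\otimes(u,v) \in L_x$ by the number of states of the multiplier automaton (using the bijectivity of $L$ with $G$ to derive a contradiction from a pumped word), followed by a telescoping sum along a geodesic. One small point worth tightening: when you pump, you should note that the resulting string $\otimes(u,v')$ being accepted by $A_x$ means $\otimes(u,v') \in L_x$, so by the \emph{definition} of $L_x$ (not merely the regularity of $L$) one immediately has $v' \in L$ and $\ov' = \ou x = \ov$, and then the bijection forces $v' = v$, contradicting $|v'| < |v|$. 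Your final paragraph gestures at intersecting with a regular set, which is unnecessary here; the conclusion $v' \in L$ comes for free from $L_x \subseteq \otimes(L,L)$.
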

The existence of a quasigeodesic regular normal form in an automatic or graph automatic structure ensures that the word problem is solvable in quadratic time.  While the Diestel-Leader groups are metabelian, and hence have solvable word problem,  we note that it is a simple consequence of Theorem 9 of \cite{ST}, stated below as Theorem \ref{thm:ST}, that the set of normal forms defined in Section \ref{sec:nf} is quasi-geodesic.  We prove this in Section \ref{sec:nf}.

We conclude with two straightforward lemmas about convolutional languages which we will refer to in the verification of our graph automatic structure for $\GG$.

\begin{lemma}[Offset Lemma]\label{lemma:offset}
Let ${\mathcal A}$ and ${\mathcal B}$ be regular languages, with
$${\mathcal A'}=\{ \otimes(a_1,a_2) | a_i \in {\mathcal A}, |a_1|=|a_2|\}$$
and ${\mathcal B'}$ any subset of $\otimes({\mathcal B},{\mathcal B})$ which is a regular language.  Let $\Lambda$ be any finite alphabet.  Then
$$\{ \otimes(a_1b_1,a_2xb_2),\otimes(a_1yb_1,a_2b_2) | \otimes(a_1,a_2) \in {\mathcal A}', \otimes(b_1,b_2) \in {\mathcal B}', x,y \in \Lambda \}$$
is a regular language.
\end{lemma}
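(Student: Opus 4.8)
The plan is to build a finite state automaton for the target language directly, using the product/intersection constructions for regular languages together with closure of regular languages under homomorphism and inverse homomorphism. The key observation is that the target language differs from $\otimes(\mathcal{A}\cdot\mathcal{B}, \mathcal{A}\cdot\mathcal{B})$ only by the insertion of a single extra symbol (drawn from $\Lambda$) into one of the two coordinate words, at the precise position where the $\mathcal{A}'$-part meets the $\mathcal{B}'$-part. Since $\mathcal{A}'$ only contains convolutions of \emph{equal-length} words $\otimes(a_1,a_2)$, the "seam" between the $\mathcal{A}'$ block and the $\mathcal{B}'$ block occurs at the same index in both coordinates, so an automaton can detect it and nondeterministically guess whether an extra letter $x$ (in the second coordinate) or $y$ (in the first coordinate) is inserted there.

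First I would describe the automaton $M$ informally. On input a string over $(\Lambda_\diamond)^2$, $M$ runs in three phases. In Phase 1 it simulates an automaton for $\mathcal{A}'$, reading symbols $\binom{\lambda}{\mu}$ with both $\lambda,\mu\in\Lambda$ and checking that the projection onto each coordinate spells a word of $\mathcal{A}$ (the equal-length condition is automatic since we read both coordinates in lockstep and never see a $\diamond$ in this phase). At a nondeterministically chosen moment $M$ guesses that the $\mathcal{A}'$ block has ended and enters Phase 2: it reads exactly one symbol $\binom{y}{\diamond}$ or $\binom{\diamond}{x}$ with $x,y\in\Lambda$ — this accounts for the inserted letter — or it skips Phase 2 entirely in the degenerate reading (one may also merge this into the transition structure). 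In Phase 3 it simulates an automaton for $\mathcal{B}'\subseteq\otimes(\mathcal{B},\mathcal{B})$ on the remaining suffix. The accepting states are those reachable only if each phase ended in (or was consistent with) an accept state of the corresponding machine; crucially we must be careful about padding symbols $\diamond$, since $b_1$ and $b_2$ need not have equal length, so Phase 3 must be run on a genuine element of $\otimes(\mathcal{B},\mathcal{B})$ and the seam symbol must be inserted \emph{before} any $\diamond$-padding of the shorter $b_i$.

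For a cleaner argument I would instead package this as follows. Let $\pi_1,\pi_2 \colon (\Lambda_\diamond)^2 \to \Lambda_\diamond^*$ be the coordinate projections (erasing $\diamond$ on the output where appropriate), which are monoid homomorphisms, and let $h$ be the length-preserving substitution that is the identity on $(\Lambda)^2$ and sends the special "seam" letters $\binom{y}{\diamond},\binom{\diamond}{x}$ to the empty word; then the preimage under $h$ of $\otimes(\mathcal{A},\mathcal{A})\cdot(\text{seam letter})\cdot\mathcal{B}'$, intersected with the regular constraint that all seam letters occur contiguously in a single block immediately after a maximal all-$\Lambda^2$ prefix whose two projections lie in $\mathcal{A}$, is exactly (a relabeling of) the desired language. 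Each ingredient — concatenation, intersection, inverse homomorphism, homomorphic image — preserves regularity, so the result follows. The main obstacle, and the only place real care is needed, is bookkeeping the interaction between the inserted letter and the $\diamond$-padding: one must ensure the extra $x$ (resp.\ $y$) is inserted at the true boundary between $a_i$ and $b_i$ in both coordinates simultaneously, which is exactly what the equal-length hypothesis on $\mathcal{A}'$ buys us, and that it is not confused with a padding symbol of a shorter $b_i$. Once that alignment is pinned down, the automaton construction is routine.
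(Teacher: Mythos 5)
Your central structural observation is right — because $|a_1| = |a_2|$, the ``seam'' lies at the same position in both coordinates of the convolution, so $\otimes(a_1b_1, a_2xb_2) = \otimes(a_1,a_2)\cdot\otimes(b_1, xb_2)$ and the problem reduces to handling the suffix. But your handling of that suffix has a genuine gap. In the convolution $\otimes(a_1b_1, a_2xb_2)$, the symbol at position $|a_1|+1$ is $\binom{b_1[1]}{x}$, \emph{not} $\binom{\diamond}{x}$ — a diamond appears there only in the degenerate case $b_1 = \emptyset$. More importantly, the remaining suffix $\binom{b_1[2]}{b_2[1]}\binom{b_1[3]}{b_2[2]}\cdots$ is \emph{not} a word of $\mathcal{B}'\subseteq\otimes(\mathcal{B},\mathcal{B})$: the two coordinates are out of alignment by one position. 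So ``Phase 2: read one seam symbol $\binom{\diamond}{x}$; Phase 3: simulate $\mathcal{B}'$ on the remaining suffix'' does not work, and the ``cleaner'' homomorphism $h$ that erases seam letters of the form $\binom{y}{\diamond}, \binom{\diamond}{x}$ has, in the generic case, nothing to erase. Your aside that ``one may also merge this into the transition structure'' gestures at what is actually needed, but it is left undeveloped and the rest of the argument contradicts it.

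What the paper does is precisely to isolate this offset into a separate statement (Lemma~\ref{lemma:shift}, quoted from \cite{TY}): the map $\otimes(w_1,w_2)\mapsto\otimes(w_1,xw_2)$ preserves regularity, because an automaton can maintain a one-symbol buffer in its state and feed the $\mathcal{B}'$-automaton the realigned pair $\binom{b_1[k]}{b_2[k]}$ one step late. Given that lemma, the Offset Lemma is immediate from closure of regular languages under concatenation and finite union, using the decomposition you already identified. To repair your proof you should either invoke such a shift lemma explicitly, or build the one-symbol buffer into your Phase~3 automaton and drop the claims about seam symbols $\binom{y}{\diamond}$, $\binom{\diamond}{x}$ and the erasing homomorphism, which are incorrect as stated.
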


The proof of Lemma \ref{lemma:offset} follows easily from the next lemma, whose proof is given in \cite{TY}.
\begin{lemma}
\label{lemma:shift}
Let ${\mathcal L}$ be a regular language defined over a finite alphabet $\Lambda$.   Then the set $$\{\otimes(xw,w) | w \in {\mathcal L}, \ x \in \Lambda\}$$ forms a regular language.
\end{lemma}

\section{A regular language of normal forms}
\label{sec:nf}

We begin the construction of a graph automatic structure for $\GG$ with a quasigeodesic normal form which is also a regular language.   We present a short proof that our normal form language is quasigeodesic, as it follows nicely from the computation of the word metric for these groups given in \cite{ST}.

Let $g=\g$.  The vector ${\bf m}=\m$ and the polynomial $R$ uniquely define $g$.  However, $g$ is also uniquely defined from the data ${\bf m}$ and the polynomial $$R'=\Pi_{m=1}^{d-1} (t+l_i)^{-m_i}R$$ and this forms the basis of the normal form we use for our regular language.  Namely, using the Decomposition Lemma from \cite{ST} (Lemma \ref{lemma:decomp}) we can uniquely decompose $R'$ as follows: $$R' = R_1 + R_2 + \cdots R_d$$ where $R_i$ for $1 \leq i < d$ contains only terms in the formal variable $t+l_i$ of negative degree, and $R_d$ contains terms in the variable $t^{-1}$ of nonpositive degree.

As concatenations of regular languages are regular, we define a prefix language and a suffix language for our normal form which are both regular, and whose union gives a regular language of normal forms for elements of $\GG$. Let the prefix language ${\mathcal P}$ be defined over the alphabet $\{x,y,\#\}$, and encode the vector $\m$ as $\epsilon_1^{m_1} \# \epsilon_2^{m_2} \# \cdots \# \epsilon_{d-1}^{m_{d-1}}$, where $\epsilon_i=x$ if $m_i > 0$ and $\epsilon_i=y$ if $m_i < 0$; if $m_i=0$ we omit $\epsilon_i$.  Note that the prefix corresponding to the identity is $\#^{d-2}$.  It is clear that ${\mathcal P}$ is a regular language.

We now encode the information contained in the polynomials $R_1, \cdots ,R_d$ as a suffix language ${\mathcal S}$ over the alphabet $\{\#,b_0,b_1, \cdots ,b_{q-1}\}$ where $\{b_0,b_1, \cdots ,b_{q-1}\} =  \Z_q$.   If $R_i \neq 0$, denote its minimal degree $-\delta_i$, for $\delta_i \in \N$, and when $i \neq d$ write $$R_i= \sum_{j=1}^{\delta_i} \beta_{i,j} (t+l_i)^{-j},$$ including a coefficient of $0$ when $(t+l_i)^{-n}$ is not present in $R_i$.  When $i=d$ we include a constant term in the polynomial expression.  With the given indexing, we allow $\beta_{i,1}=0$ but $\beta_{i ,\delta_i} \neq 0$.  If $R_i=0$ then let $\delta_i=0$ as well.

Let $S_i = \beta_{1} \beta_2 \cdots \beta_{\delta_i}$ with $\beta_j \in \Z_q$ for $1 \leq i \leq d$ denote the the string of coefficients of $R_i$, with $S_i = \emptyset$ if $R_i=0$, the entry in the suffix language ${\mathcal S}$  corresponding to the tuple $R_1, \cdots ,R_d$ is
$$S_1 \# S_2 \# \cdots \# S_d.$$
We use the string $\#^{d-1}$ to denote the suffix string when $R_1=R_2= \cdots =R_d=0$.  Let ${\mathcal S}$ be the union of all strings of the above form; it is clear that ${\mathcal S}$ is a regular language and hence the language of concatenations ${\mathcal N} = {\mathcal P}{\mathcal S}$ describes a regular language of normal forms for elements of $\GG$.  Elements of the normal form language ${\mathcal N}$ will be written as $(p,s)$ for some $p \in {\mathcal P}$ and $s \in {\mathcal S}$.  If $u \in {\mathcal N}$, let $\bar{u}$ denote the corresponding element of $\GG$, and if $g \in \GG$ we let $\nu(g)$ denote the corresponding element of ${\mathcal N}$, with $\pi(g)$ and $\sigma(g)$, respectively, denoting the prefix and suffix strings of  $\nu(g)$.

In Theorem \ref{thm:nf-lang} below we show that this normal form language is quasigeodesic.  We will use the following theorem from \cite{ST} which relates the word metric with respect to $S_d(q)$ with the product metric on the product of trees underlying the Diestel-Leader graph $DL_d(q)$.

\begin{theorem}[\cite{ST}, Thm. 9]\label{thm:ST}
Let $l(g)$ denote the word length of $g \in \GG$ with respect to the generating set $S_{d,q}$
and $d_T (g)$ the distance in the product metric on the product of trees between the vertex in $DL_d(q)$ corresponding to $g$  and $o$,
the fixed basepoint corresponding to the identity in $\GG$. Then
$$\frac{1}{2} d_T(g) \leq l(g) \leq 2d_T(g)$$
that is, the word length is quasi-isometric to the distance from the identity in the product metric
on the product of trees.
\end{theorem}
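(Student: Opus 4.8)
The plan is to establish the two inequalities separately, using the fundamental fact from \cite{BNW} that the Cayley graph $\Gamma(\GG,S_{d,q})$ is exactly the Diestel--Leader graph $DL_d(q)\subseteq T_1\times\cdots\times T_d$, and taking $d_T$ to be the product metric $d_T\big((x_i),(y_i)\big)=\sum_{i=1}^{d}d_{T_i}(x_i,y_i)$. The lower bound $\tfrac12 d_T(g)\le l(g)$ is immediate: by definition an edge of $DL_d(q)$ joins two vertices that agree in all but two tree-coordinates, in each of which they differ by a single tree-edge, so every Cayley-graph edge has product length exactly $2$. A geodesic word of length $l(g)$ for $g$ is a path of $l(g)$ such edges from $o$ to the vertex of $g$, whence $d_T(g)\le 2\,l(g)$.

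For the upper bound $l(g)\le 2\,d_T(g)$ I would build an efficient path in $DL_d(q)$ from $o$ to $g$ out of geodesics in the individual trees. Write the vertex of $g$ as $(g_1,\dots,g_d)$, the basepoint as $(o_1,\dots,o_d)$, and let $h_i$ be the height (Busemann) function on $T_i$. The geodesic in $T_i$ from $o_i$ to $g_i$ has a ``mountain'' shape: it ascends $u_i$ steps to the confluent of $o_i$ and $g_i$ and then descends $w_i$ steps, with $u_i+w_i=d_{T_i}(o_i,g_i)$ and $u_i-w_i=h_i(g_i)-h_i(o_i)$. Since $o$ and $g$ both lie on the slice $\sum_i h_i=0$, summing over $i$ gives $\sum_i u_i=\sum_i w_i=:U$, and hence $d_T(g)=2U$. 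Each edge of $DL_d(q)$ performs exactly one ascending step in one coordinate together with one descending step in a different coordinate, the choice of neighbor being dictated by $g$ through the Laurent-polynomial data of the Decomposition Lemma (Lemma \ref{lemma:decomp}). So the task reduces to scheduling the $U$ required ascending steps (with $u_i$ of them in coordinate $i$, and within coordinate $i$ all preceding that coordinate's descending steps) against the $U$ required descending steps, using as few edges as possible.

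The main obstacle is precisely this scheduling problem: ascending and descending steps concentrated in one coordinate cannot be paired with each other, so one must route through an auxiliary coordinate — driving it past its target value and later undoing the excursion — and bound the resulting waste. I would prove that a legal schedule always exists using at most $2U$ edges, which already yields $l(g)\le 2U=d_T(g)\le 2\,d_T(g)$: concretely, a two-stage procedure that first performs every coordinate's descending steps while buffering the required ascents in one fixed coordinate $\iota$, then performs every coordinate's ascending steps while spending the compensating descents that bring $\iota$ (and every other coordinate) home, uses at most $2U$ edges; the count uses $\sum_i u_i=\sum_i w_i$, and one must check the sibling choices made along the way are exactly those of $g$. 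A cleaner variant, presumably the route of \cite{ST}, computes $d_T(g)=\sum_i d_{T_i}(o_i,g_i)$ directly from the decomposition $R'=R_1+\cdots+R_d$ — each $d_{T_i}(o_i,g_i)$ being essentially the degree $\delta_i$ of $R_i$ plus a height term — and compares it term by term with the traveling-salesman-type formula for $l(g)$ proved there, optimizing the comparison constant to $2$. In either approach the content is entirely in pinning this multiplicative constant down, since the two elementary containments above already force $l(g)$ and $d_T(g)$ to agree up to a bounded factor.
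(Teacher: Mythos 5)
First, note that the paper does not prove this statement at all: it is imported verbatim as Theorem~9 of \cite{ST} and used only to deduce that the normal form is quasigeodesic, so there is no in-paper argument to measure you against. Your lower bound is correct and is the standard one: every edge of $DL_d(q)$ changes exactly two tree coordinates by one tree-edge each, so a Cayley path of length $l(g)$ moves product distance at most $2l(g)$, giving $d_T(g)\le 2\,l(g)$.

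The upper bound, however, has a genuine gap, and it sits exactly where you locate ``the main obstacle.'' You correctly identify the precedence constraint --- in each tree $T_i$ the $u_i$ ascending steps of the geodesic must all occur before the $w_i$ descending steps, since any walk from $o_i$ to $g_i$ that descends before reaching the confluent $o_i\curlywedge g_i$ lands in the wrong subtree and must be undone --- but your two-stage schedule violates it: performing ``every coordinate's descending steps'' first and its ascending steps afterwards leaves coordinate $i$ at the height of $g_i$ but at the wrong vertex whenever $u_i,w_i>0$ (already for $u_i=w_i=1$, you descend to a child of $o_i$ and then ascend back to $o_i$, not to $g_i$). So the procedure as described does not terminate at $g$, and no corrected schedule is exhibited or counted. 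The claimed count of $2U$ edges (i.e.\ $l(g)\le d_T(g)$, stronger than the theorem) is therefore an assertion, not a proof; the entire content of the upper bound is precisely the combinatorial scheduling argument --- pairing each ascent in one tree with a descent in another, subject to the per-tree ascend-before-descend constraint, and bounding the wasted excursions forced when, say, a single tree carries all of $u_i=w_i=N$ and every one of its $2N$ moves must be buffered by balanced excursions elsewhere. Your fallback (``compare with the traveling-salesman-type formula for $l(g)$ proved there'') is circular here, since that formula is the substance of \cite{ST}. To repair the argument you would need to produce an explicit legal schedule and count it; a crude but honest version (realize each tree's geodesic in turn, absorbing its $u_i+w_i$ moves by balanced excursions in a single auxiliary coordinate, which return home automatically because ascents in a tree are deterministic and the sibling choices on descents can be prescribed by the $q$ type~2 generators) costs about $2\sum_i(u_i+w_i)=2\,d_T(g)$ edges, which is exactly the constant the theorem asserts.
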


The word metric in $\GG$ is computed explicitly in \cite{ST} and we refer the reader to that paper for more explanation of the facts given below.

Let $o=(o_1,o_2, \cdots ,o_d)$ denote the vertex of $DL_d(q)$ corresponding to the identity in $\GG$.  Let $g \in \GG$ correspond to $(\tau_1,\tau_2, \cdots ,\tau_d) \in DL_d(q)$, where $\tau_k$ is a vertex in the $k$-th tree $T_k$ in the product.  Let $u_k=d(o_k,o_k\curlywedge \tau_k)$ and $v_k=d(\tau_k,o_k\curlywedge \tau_k)$.
\begin{itemize}
\item As they represent distances in a tree, $0 \leq u_k$ and $0 \leq v_k$ for all
$k$.

\smallskip

\item In $T_k$, the shortest path from $o_k$ to $\tau_k$ has length $u_k+v_k$.  This path necessarily looks either like an inverted vee, in which case the length of the ``upward" portion has length $u_k$ and the ``downward" portion has length $v_k$, or $u_k=0$ and the path is a single segment of length $v_k$.

\smallskip

\item If $h_k$ is the height function defined on $T_k$, then $h_k(\tau_k)=v_k-u_k$.  Moreover, if $g$ is given as in Equation \eqref{eqn:g}, and $(m_1,m_2, \cdots ,m_d)$ is the $d$-tuple of exponents in the upper left entry of the matrix for $g$, then the height of $\tau_k$ in $T_k$ is $m_k$, so $m_k=v_k-u_k$.

\smallskip

\item The defining conditions of the Diestel-Leader graph ensure that $\sum_{i=1}^d (v_i-u_i) = 0$.
\end{itemize}

Using this notation we define a projection from $\GG$ to $(\Z^2)^d$ by
$$\Pi(g)= \Pi((\tau_1,\tau_2, \cdots ,\tau_d)) = \left((u_1,v_1),(u_2,v_2), \cdots ,(u_d,v_d) \right).$$  We will use this projection in the proof of Theorem \ref{thm:nf-lang}.

\begin{theorem}\label{thm:nf-lang}
The language ${\mathcal N} = {\mathcal P} {\mathcal S}$ of normal forms for elements of $\GG$ for $d \geq 3$ described above is a regular quasigeodesic language.
\end{theorem}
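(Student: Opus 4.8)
The plan is to establish two things: that $\mathcal{N}$ is a regular language in bijection with $\GG$ (essentially already done in the construction), and that it is quasigeodesic, i.e., there is a constant $D$ with $|u|_{\Lambda} \leq D(l(\bar u) + 1)$ for all $u \in \mathcal{N}$. By Theorem~\ref{thm:ST}, it suffices to bound $|u|_{\Lambda}$ linearly in $d_T(\bar u)$, the distance in the product of trees from the basepoint. So I would work entirely on the tree side, using the projection $\Pi(g) = ((u_1,v_1),\dots,(u_d,v_d))$ and the facts listed just before the statement: each $u_k, v_k \geq 0$, the geodesic from $o_k$ to $\tau_k$ in $T_k$ has length $u_k + v_k$, $m_k = v_k - u_k$, and $\sum_k (v_k - u_k) = 0$. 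In particular $d_T(g) = \sum_{k=1}^d (u_k + v_k)$.

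First I would bound the prefix length $|\pi(g)|$. The prefix encodes $\m$ as $\epsilon_1^{m_1}\#\cdots\#\epsilon_{d-1}^{m_{d-1}}$, so $|\pi(g)| = (d-2) + \sum_{k=1}^{d-1}|m_k|$. Since $|m_k| = |v_k - u_k| \leq u_k + v_k$, we get $\sum_{k=1}^{d-1}|m_k| \leq \sum_{k=1}^{d}(u_k+v_k) = d_T(g)$, so $|\pi(g)| \leq d_T(g) + (d-2)$. Next, the suffix length: $|\sigma(g)| = (d-1) + \sum_{k=1}^d \delta_k$, where $-\delta_k$ is the minimal degree of $R_k$ (and $\delta_k = 0$ if $R_k = 0$). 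The key geometric input is that $\delta_k$ is exactly the ``depth'' data recorded by the vertex $\tau_k$ in $T_k$ — more precisely, the coordinate of $g$ in $T_k$ is the equivalence class of the polynomial $R_k$ (the negative-degree part, or nonpositive-degree part when $k=d$, of $(t+l_k)^{-m_k} \cdot (t+l_1)^{-m_1}\cdots(t+l_{d-1})^{-m_{d-1}} R$), and the number of negative-degree coefficients $\delta_k$ of this polynomial is bounded by the distance one must descend in $T_k$ below the horocycle through $o_k$, hence by $v_k$ (with a bounded additive correction, at most $1$, for the $k=d$ case where a constant term is allowed, or from normalization conventions). Thus $\delta_k \leq v_k + O(1)$, giving $\sum_k \delta_k \leq \sum_k v_k + O(d) \leq d_T(g) + O(d)$, so $|\sigma(g)| \leq d_T(g) + O(d)$. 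Combining, $|\nu(g)|_\Lambda = |\pi(g)| + |\sigma(g)| \leq 2\, d_T(g) + O(d)$, and then Theorem~\ref{thm:ST} gives $|\nu(g)|_\Lambda \leq 4\, l(g) + O(d)$, which is the desired quasigeodesic bound with $D$ depending only on $d$.

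The main obstacle is the step identifying $\delta_k$ with (a quantity comparable to) $v_k$. This requires carefully unpacking the identification, described in \cite{BNW} and \cite{STWarxiv}, between a group element and its vertex in $DL_d(q)$: one must check that the ``$\mathcal{LS}_i$'' Laurent-polynomial computation producing the $T_i$-coordinate, followed by truncation to negative-degree terms, yields precisely $R_i$ as defined in our normal form (so that the length of the string $S_i$ of coefficients genuinely equals $\delta_i$), and that the minimal degree $-\delta_i$ of that polynomial equals $-v_i$ — i.e., the number of steps $\tau_i$ lies below the confluent $o_i \curlywedge \tau_i$ along the horocyclic product structure. This is a bookkeeping argument about horocycle functions on regular trees and the correspondence between vertices at height $v_i - u_i$ and truncated Laurent polynomials; once it is in place, the rest is the elementary arithmetic above. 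I would also note at the end that regularity of $\mathcal{N} = \mathcal{PS}$ was already verified in the construction, and that the bijection with $\GG$ follows from the uniqueness clause in the Decomposition Lemma (Lemma~\ref{lemma:decomp}) applied to $R' = \Pi_{i=1}^{d-1}(t+l_i)^{-m_i} R$, so the theorem is proved.
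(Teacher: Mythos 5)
Your proposal is correct and follows essentially the paper's own route: both arguments reduce quasigeodesicity to Theorem~\ref{thm:ST}, split $|\nu(g)|$ into the prefix contribution $\sum_k |m_k| \leq \sum_k (u_k+v_k)$ and the suffix contribution $\sum_k \delta_k$, and then relate the $\delta_k$ to the tree data $(u_k,v_k)$ via the vertex identification from \cite{STWarxiv} --- the step you flag as the main obstacle is exactly the step the paper also imports (stated there as $\delta_i = u_i - m_i$) rather than re-proving. The only difference is minor: you invoke just the one-sided estimate $\delta_k \leq v_k + O(1)$, which indeed suffices for the upper bound defining quasigeodesicity, whereas the paper uses an exact relation and records a lower bound as well.
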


\begin{proof}
Above we showed that ${\mathcal N}$ is a regular language; it remains to prove that it is quasigeodesic as well.
Let  $g=\g \in \GG$ and $u_i$ and $v_i$ be as above.  Then $$d_T(g) = \sum_{j=1}^d u_j + \sum_{j=1}^d v_j = 2 \sum_{j=1}^d u_j = 2\sum_{j=1}^d v_j.$$

The length of $\nu(g)$ is
$$|\nu(g)| = \sum_{j=1}^{d-1} |m_j| + (d-2) + \sum_{j=1}^d \delta_j + 1+(d-1),$$ where $-\delta_i$ is the minimal degree of the polynomial $R_i$ defined above, the $d-2$ and $d-1$ summands correspond to the number of $\#$ symbols in the prefix or suffix string of the normal form, and the extra $1$ reflects the fact that only $R_d$ has a constant term.

When computing the tree distance $d_T$, $u_i$ is the minimal degree of ${\mathcal LS}_i(R)$, if this degree is negative, and zero otherwise, as described in \cite{STWarxiv}.  To determine $\nu(g)$ we compute ${\mathcal LS}_i(\Pi_{j=1}^{d-1} (t+l_j)^{-m_j} R)$ and denote its minimal degree by $-\delta_i$, for $\delta_i >0$ ($\geq 0$ if $i=d$), and zero otherwise.  The factor of $(t+l_i)^{-m_i}$ in the above expression relates $u_i$ and $\delta_i$, namely,  $\delta_i=u_i-m_i$. Hence

\begin{align*}
|\nu(g)| &= \sum_{j=1}^{d-1} |m_j| + \sum_{j=1}^d (u_j-m_j) + 2d-2 \\
 &= \sum_{j=1}^{d-1} |v_j-u_j| + 2\sum_{j=1}^d u_j  -\sum_{j=1}^d v_j  + 2d-2 \\
 &=  \sum_{j=1}^{d-1} |v_j-u_j| + \sum_{j=1}^d u_j   + 2d-2
\end{align*}
where the last equality holds because $\sum_{j=1}^d u_j=\sum_{j=1}^d v_j$.
As $u_i,v_i \geq 0$, we make the comparisons:
$$|\nu(g)| \leq \sum_{j=1}^{d-1} v_j + \sum_{j=1}^{d-1} v_j + \sum_{j=1}^d u_j   + 2d-2  \leq 3\sum_{j=1}^d u_j  + 2d-2 \leq 2 d_T(g) + 2d-2$$
and
$$|\nu(g)| \geq \sum_{j=1}^d u_j + 2d-2 \geq \frac{1}{2} d_T(g) + 2d-2$$
from which it follows that ${\mathcal N}$ is a quasigeodesic regular language.
\end{proof}

\section{Multiplier languages}

We now show that the multiplier languages $\LL_s$, where $s \in S_d(q)$, arising from this normal form ${\mathcal N}$ are also regular.  The multiplier language $\LL_s$ consists of convolutions $\otimes(\nu(g),\nu(gs))$ where $g \in \GG$.  It suffices to check that $L_s$ is regular when $s$ has one of two forms:
$$s \in \left\{  \left( \begin{array}{cc}  t+l_i & b \\ 0 & 1 \end{array} \right),  \left( \begin{array}{cc} (t+l_i)(t+l_j)^{-1} & b(t+l_j)^{-1} \\ 0 & 1 \end{array} \right) \right\}$$
with $b \in \Z_q$, as it is proven in \cite{ET} that $L_s$ is a regular language if and only if $L_{s^{-1}}$ is a regular language.   The following theorem constructs the remainder of the graph automatic structure for $\Gamma_d(q)$.

\begin{theorem}\label{thm:mult-lang}
Let $s \in S_d(q)$ be a
\begin{itemize}
\item a type 1 generator of $\GG$ of the form $\left( \begin{array}{cc}  t+l_i & b \\ 0 & 1 \end{array} \right)$, or
\item a type 2 generator of the form $\left( \begin{array}{cc} (t+l_i)(t+l_j)^{-1} & b(t+l_j)^{-1} \\ 0 & 1 \end{array} \right)$
\end{itemize}
where $1 \leq i,j \leq d-1$, $i < j$ and $b \in \Z_q$. The language ${\mathcal L}_s = \{ \otimes(\nu(g),\nu(gs)) | g \in \GG \}$ is a regular language.
\end{theorem}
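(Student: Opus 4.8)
The plan is to analyze the effect of right-multiplication by a generator $s$ on a group element $g = \g$ and show that the relationship between $\nu(g)$ and $\nu(gs)$ is recognizable by a finite-state automaton reading the convolution $\otimes(\nu(g),\nu(gs))$. First I would fix $g$ with exponent vector ${\bf m} = \m$ and associated polynomial $R' = \Pi_{j=1}^{d-1}(t+l_j)^{-m_j}R$, decomposed via Lemma \ref{lemma:decomp} as $R' = R_1 + \cdots + R_d$. For a type $1$ generator $s = \left(\begin{array}{cc} t+l_i & b \\ 0 & 1\end{array}\right)$, matrix multiplication gives that $gs$ has exponent vector ${\bf m}' = (m_1,\dots,m_i+1,\dots,m_{d-1})$ and upper-right entry $R + \Pi_{j=1}^{d-1}(t+l_j)^{m_j}\cdot b$, so that the new normalized polynomial is $(R')^{\text{new}} = (t+l_i)^{-1}R' + (t+l_i)^{-1}b$. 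The key point is that passing from $R'$ to $(t+l_i)^{-1}R'$ multiplies each component; I would use Equations \eqref{eqn:1a} and \eqref{eqn:2a} (the $k=-1$ expansions, with their recursions $\alpha_{n+1} = -C_{r,s}\alpha_n$ and $\alpha'_{n+1} = -l_r\alpha'_n$) to re-expand $(t+l_i)^{-1}R_j$ for $j \neq i$ in terms of the variable $t+l_j$ (or $t^{-1}$ when $j = d$), then collect all contributions and re-decompose. Crucially, multiplying a polynomial supported in negative degrees of $t+l_i$ by $(t+l_i)^{-1}$ again lands in negative degrees of $t+l_i$, while the cross-terms $(t+l_i)^{-1}R_j$ for $j\ne i$, when re-expanded, produce a polynomial all of whose terms are affected in a \emph{shift-invariant} way: the coefficient recursion depends only on position, not on absolute index.

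The effect on the prefix string is trivial to track: a single letter is inserted or deleted (or a sign flips) in the $i$th block of $x$'s and $y$'s, and a finite-state automaton reading $\otimes(\pi(g),\pi(gs))$ handles this using the Offset Lemma (Lemma \ref{lemma:offset}) and Lemma \ref{lemma:shift}. The heart of the argument is the suffix string. For each $j$, the coefficient string $S_j$ of $R_j^{\text{new}}$ must be compared with that of $R_j$. For $j = i$, the relation $R_i^{\text{new}} = (t+l_i)^{-1}R_i + (\text{cross terms in }t+l_i) + (t+l_i)^{-1}b$ means the coefficient string is essentially that of $R_i$ shifted by one position, plus a correction that is itself a convergent sum with geometrically-recursive coefficients — this is exactly the situation Lemma \ref{lemma:shift} and the Offset Lemma were set up to handle, since an automaton can maintain the "running correction" as a single $\Z_q$-valued state because of the recursion $\alpha_{n+1} = -C_{r,s}\alpha_n$. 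For $j \neq i$, $R_j^{\text{new}}$ differs from $R_j$ only by the image of the $t+l_i$-supported and $t^{-1}$-supported parts under re-expansion, which again an automaton accumulates using the bounded recursions. The type $2$ generator case $s = \left(\begin{array}{cc} (t+l_i)(t+l_j)^{-1} & b(t+l_j)^{-1} \\ 0 & 1\end{array}\right)$ is handled identically: the exponent vector changes by $+1$ in slot $i$ and $-1$ in slot $j$, and the normalized polynomial transforms as $(R')^{\text{new}} = (t+l_i)^{-1}(t+l_j)R' - (t+l_i)^{-1}b$; one expands $(t+l_j)$ using Equation \eqref{eqn:3} or directly, then proceeds as before.

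I expect the main obstacle to be bookkeeping the \emph{boundary effects} of the re-decomposition: when $(t+l_i)^{-1}R_j$ is re-expanded in the variable $t+l_j$, it contributes terms of \emph{non-negative} degree in $t+l_j$ which must be moved out of $R_j^{\text{new}}$ and redistributed (via Equations \eqref{eqn:1} and \eqref{eqn:3}) among the other components $R_k^{\text{new}}$, and one must verify this cascade terminates and only affects finitely many low-degree coefficients — so that the automaton needs only bounded memory. Concretely, the degree $-\delta_j$ of each $R_j$ can change by at most a bounded amount, and only the top few coefficients of each block are perturbed by a correction term computable from a bounded window of the input; the geometric recursions guarantee the "tail" corrections are determined by a single state. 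Making this precise — that the difference between $\sigma(g)$ and $\sigma(gs)$, read as a convolution, is governed by finitely many states, finitely many correction registers in $\Z_q$, and the shift/offset operations of Lemmas \ref{lemma:offset} and \ref{lemma:shift} — is the technical core. Once the automaton is described for each of the finitely many $s$ (finitely many choices of $i$, $j$, and $b \in \Z_q$), regularity of $\LL_s$ follows, and by the result of \cite{ET} quoted above the inverse generators are covered as well, completing the proof and hence, with Theorem \ref{thm:nf-lang}, the proof of Theorem \ref{thm:main}.
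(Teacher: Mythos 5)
Your overall strategy coincides with the paper's: track the prefix change with a trivial automaton, pass from $R'$ to the transformed polynomial, decompose via Lemma \ref{lemma:decomp}, use the geometric recursions behind Equations \eqref{eqn:1a} and \eqref{eqn:2a} to make the coefficient relations finite-state checkable, handle offsets with Lemmas \ref{lemma:offset} and \ref{lemma:shift}, and cover inverses by the result of \cite{ET}. However, two of your structural claims about how the blocks transform are wrong, and they sit exactly where the real work is. First, for $n\neq i$ the new block $Q_n$ is \emph{not} ``$R_n$ plus the re-expanded image of the $t+l_i$-supported and $t^{-1}$-supported parts'': the paper's computation shows those cross terms contribute no terms of negative degree in $t+l_n$, so $Q_n$ is the negative-degree part of $(t+l_i)^{-1}R_n$ itself, in which \emph{every} coefficient changes, $\gamma_k=\sum_x \alpha_x\beta_{x+k}$ as in Equation \eqref{eqn:coef}; regularity comes from the first-order recursion $\gamma_{k+1}=-C_n^{-1}\gamma_k+\beta_k$, not from a small boundary correction. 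Second, and more seriously, your ``main obstacle'' paragraph asserts that only the top few coefficients of each block are perturbed, by corrections computable from a bounded window of the input. This contradicts your own earlier remark about a running correction and is false for the block $n=i$: the coefficient of $(t+l_i)^{-1}$ in $Q_i$ is the global sum \eqref{eqn:firstcoeff} over \emph{all} coefficients of \emph{all} other blocks. Recognizing it needs an automaton that carries a running sum in $\Z_q$ together with the current power of $C_n$ (these powers cycle modulo $q$, which is where the invertibility hypotheses enter), and, since this target coefficient $\psi_{i,1}$ is read before the later blocks that feed into the sum, the paper must also propagate the read value through $q$ parallel copies of the machines for blocks $i+1,\dots,d$ and accept only if the final sum matches. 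There is no ``cascade of redistribution'' to control: the Decomposition Lemma gives each component directly as the negative-degree (or nonpositive-degree) part of the corresponding Laurent expansion, so that worry is misplaced, while the genuine difficulty -- the globally determined first coefficient of block $i$ -- is exactly what your bounded-window claim would get wrong.

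A smaller omission: your construction only imposes conditions that are necessary for $h=gs$. To conclude that the accepted language is exactly ${\mathcal L}_s$ one must also argue, as the paper does at the end of Section \ref{sec:aut2}, that the prefix condition, the length data of Lemmas \ref{lemma:lengthsame1} and \ref{lemma:lengthsame2}, and the coefficient recursions determine $\nu(gs)$ uniquely from $\nu(g)$, so that any accepted pair really is of the form $\otimes(\nu(g),\nu(gs))$.
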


We prove Theorem \ref{thm:mult-lang} in several steps.  First observe that the relationship between the prefix strings $\pi(g)=(m_1,m_2, \cdots ,m_{d-1})$ and $\pi(gs)=(m'_1,m'_2, \cdots ,m'_{d-1})$ is easily determined:

\begin{enumerate}
\item if $s$ is a type 1 generator, $m'_i=m_i+1$ and $m'_k = m_k$ for all other $k$.

    \medskip

\item if $s$ is a type 2 generator and $i<j$, then $m'_i=m_i+1$ and $m'_j=m_j-1$.  For all $k \neq i,j$ we have $m'_k=m_k$.
\end{enumerate}
These conditions are easily checked with a finite state machine, and we conclude that for a fixed generator, the set of strings
${\mathcal P}_s = \{ \otimes(p_1,p_2) | p_i \in {\mathcal P} \}$ satisfying the above conditions is a regular language, regardless of whether these prefix strings arise from $g$ and $gs$.

The next step in the proof of Theorem \ref{thm:mult-lang} is to determine the relationship between $\sigma(g)$ and $\sigma(gs)$, and construct a finite state machine which recognizes this relationship between any two suffix strings.  Beginning with a pair $g,gs \in \GG$, if the polynomials $R_1,R_2, \cdots R_d$ determine $\sigma(g)$ and $Q_1,Q_1, \cdots ,Q_d$ determine $\sigma(gs)$, we can write the suffix strings as  $\sigma(g) = S_1 \# S_2 \# \cdots \# S_d$ and $\sigma(gs) = S'_1 \# S'_2 \# \cdots \# S'_d$ where each $S_i=\sigma(R_i)$ and $S_i'=\sigma(Q_i)$ is a string of elements of $\Z_q$.  In Sections \ref{sec:aut1} and \ref{sec:aut2} we determine the algebraic relationship between the entries of $S_k$ and $S_k'$, and build a finite state machine which recognizes this relationship.  Namely, for a fixed generator $s$ we first construct finite state automata which verify:
\begin{enumerate}
\item for each $n \neq i$, that the strings $\sigma(R_n)$ and $\sigma(Q_n)$ differ in the appropriate manner.  This step creates $d-1$ finite state automata.
\item when $n=i$, that the first entry in  $\sigma(Q_i)$ relates to the remaining coefficients in $\sigma(g)$ in the appropriate manner.  This entry corresponds to the coefficient of $(t+l_i)^{-1}$ in $Q_i$.
\item when $n=i$ that the remaining entries in $\sigma(R_i)$ and $\sigma(Q_i)$ differ in appropriate manner.
\end{enumerate}
These finite state automata are combined to accept a regular language of convolutions satisfying all of the above conditions, which includes $\{\otimes(\sigma(g),\sigma(gs)) | g \in \GG \}$.  Let $\mathrm{M}_s$ denote the automaton which accepts this language.

In Lemmas \ref{lemma:lengthsame1} and \ref{lemma:lengthsame2} below we determine the relative lengths of $S_k$ and $S'_k$ arising in this way, when $s$ is first a type 1 generator and then a type 2 generator.

Let $\overline{{\mathcal N}'_s} = \otimes ({\mathcal S},{\mathcal S}) = \{\otimes(\sigma(g),\sigma(h)) | g,h \in \GG \}$  be the language of convolutions of all possible suffix strings arising from elements of $\GG$.  Generically, we view an element of $\overline{{\mathcal N}'_s}$ as follows.  Let $\Phi_n$ and $\Psi_n$ be strings of length $\eta_n$ and $\chi_n$, respectively, of symbols from the finite alphabet consisting of elements of $\Z_q$, for $1 \leq n \leq d$.  Then $\otimes(\Phi_1\#\Phi_2\#\cdots\# \Phi_d,\Psi_1\#\Psi_2\#\cdots \#\Psi_d) \in \overline{{\mathcal N}'_s}$.  Without loss of generality, for $n \neq i$ we assume (as these conditions can be verified with finite state automata) that the lengths $\eta_k$ and $\chi_k$ agree with Lemma \ref{lemma:lengthsame1} if $s$ is a type 1 generator and  Lemma \ref{lemma:lengthsame2} if $s$ is a type 2 generator.  It is clear that $\overline{{\mathcal N}'_s}$ is a regular language.

Using the finite state automaton $\mathrm{M}_s$ constructed to accept convolutions of the form $\otimes(\sigma(g),\sigma(gs))$, we conclude that the subset ${\mathcal N}'_s$ of $\overline{{\mathcal N}'_s}$ of convolutions which are accepted by $\mathrm{M}_s$ is also a regular language.  In this language, $\Phi_n$ and $\Psi_n$ have the same relationship as if they arose from $\sigma(g)$ and $\sigma(gs)$, respectively.  Define ${\mathcal N}_s$ to be the language of concatenations ${\mathcal P}_s {\mathcal N}_s'$.  The following proposition follows immediately.

\begin{prop}\label{prop:concat}
The language of concatenations ${\mathcal N}_s={\mathcal P}_s {\mathcal N}_s'$ is a regular language.
\end{prop}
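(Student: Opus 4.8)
The plan is to treat Proposition~\ref{prop:concat} as a statement purely about closure properties of the class of regular languages, since every ingredient it needs has already been produced in the text preceding it. By definition $\mathcal{N}_s$ is the concatenation $\mathcal{P}_s \mathcal{N}'_s$, and the class of regular languages is closed under concatenation, so it suffices to check that each of the two factors is regular.

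For the prefix factor I would recall that $\mathcal{P}_s = \{\otimes(p_1,p_2) \mid p_i \in \mathcal{P}\}$ is cut out by the relations between $\pi(g)$ and $\pi(gs)$ listed in conditions (1) and (2) above: for a type~$1$ generator one coordinate is incremented and the rest are unchanged, and for a type~$2$ generator one coordinate is incremented, one is decremented, and the rest are unchanged. Since $\mathcal{P}$ is regular and the convolution of regular languages is regular, $\otimes(\mathcal{P},\mathcal{P})$ is regular, and the finitely many coordinate-shift conditions defining $\mathcal{P}_s$ within it are checkable by a finite state automaton; hence $\mathcal{P}_s$ is regular. For the suffix factor, $\mathcal{N}'_s$ was defined as the set of elements of $\overline{\mathcal{N}'_s} = \otimes(\mathcal{S},\mathcal{S})$ accepted by the automaton $\mathrm{M}_s$. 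As $\mathcal{S}$ is regular, so is $\overline{\mathcal{N}'_s}$, and $\mathrm{M}_s$ is a finite state automaton (this is precisely what Sections~\ref{sec:aut1} and \ref{sec:aut2} construct), so its accepted language is regular; therefore $\mathcal{N}'_s = \overline{\mathcal{N}'_s} \cap L(\mathrm{M}_s)$ is an intersection of regular languages and hence regular. Concatenating $\mathcal{P}_s$ and $\mathcal{N}'_s$ then yields that $\mathcal{N}_s$ is regular, which is the assertion.

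I do not expect a genuine obstacle inside this proposition — the difficulty is packaged into the construction of $\mathrm{M}_s$ in the later sections — but I would flag one bookkeeping point. All of these languages must be read over the common padded alphabet $(\Lambda_\diamond)^2$ with $\Lambda = \{x,y,\#,b_0,\dots,b_{q-1}\}$, and the concatenation $\mathcal{P}_s\mathcal{N}'_s$ is not literally the multiplier language $\{\otimes(\nu(g),\nu(gs)) \mid g \in \GG\}$, because the prefix convolutions $\otimes(p_1,p_2)$ can carry trailing $\diamond$ symbols when $|p_1|\neq|p_2|$, whereas in $\otimes(\nu(g),\nu(gs))$ such padding would appear only at the very end of the whole word. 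Reconciling these two descriptions is where the Offset Lemma (Lemma~\ref{lemma:offset}) enters, in the proof of Theorem~\ref{thm:mult-lang}; for the present statement only regularity of the concatenation is claimed, and that is immediate from the closure properties above.
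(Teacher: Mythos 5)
Your proof is correct and follows essentially the same route as the paper: $\mathcal{P}_s$ and $\mathcal{N}_s'$ have each been established to be regular just before this point, and regular languages are closed under concatenation, which is why the paper simply remarks that the proposition ``follows immediately.'' Your flagged bookkeeping point about interior $\diamond$-padding is a fair observation, but it bears on the later identification $\mathcal{N}_s = \mathcal{L}_s$, not on the regularity claim made here.
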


To conclude the proof of Theorem \ref{thm:mult-lang} we show in Section \ref{sec:aut2} that ${\mathcal N}_s = {\mathcal L}_s$.

\section{Construction of automata I}
\label{sec:aut1}

Let $s$ be either a type 1 or type 2 generator, so values of $i$ and possibly $j$ are fixed.  In this section we determine the relationship between the coefficients of $R_n$ and $Q_n$ arising from $\sigma(g)$ and $\sigma(gs)$, respectively, when $n \neq i$ and construct automata which recognize this relationship.

\subsection{Analysis of coefficients for type 1 generators.}
\label{sec:type1_noti}
Consider pairs $\otimes(\nu(g),\nu(gs))$ where $s$ is a type 1 generator of the form $ \left( \begin{array}{cc}  t+l_i & b \\ 0 & 1 \end{array} \right)$.  With $g= \g$ we compute
$$gs =  \left( \begin{array}{cc} (t+l_i) \Pi_{k=1}^{d-1} (t+l_k)^{m_k} & b\Pi_{k=1}^{d-1}(t+l_k)^{m_k} + R \\ 0 & 1 \end{array} \right).$$
Using the Decomposition Lemma (Lemma \ref{lemma:decomp}), write
\begin{equation} \label{eqn:R}
\Pi_{k=1}^{d-1} (t+l_i)^{-m_k}R = R_1 + R_2 + \cdots + R_d
\end{equation}
and
\begin{equation}\label{eqn:Q}
\begin{aligned}
(t+l_i)^{-1} \Pi_{k=1}^{d-1} (t+l_k)^{-m_k} (b\Pi_{k=1}^{d-1}(t+l_k)^{m_k} + R) &= b(t+l_i)^{-1} + (t+l_i)^{-1}(R_1 + R_2 + \cdots + R_d) \\ &= Q_1 + Q_2 + \cdots + Q_d
\end{aligned}
\end{equation}
where the latter decomposition into polynomials $Q_k$ is also obtained via the Decomposition Lemma.
We now use Laurent polynomials to describe the exact relationship between $R_n$ and $Q_n$ and show that any differences between them can be detected by a finite state machine.  Namely, we compute the Laurent polynomial ${\mathcal LS}_n$ of the left hand side of Equation \eqref{eqn:Q} and consider the terms of negative degree, which form $Q_n$ for $n \neq d$, and when $n=d$ the terms of nonpositive degree, which form $Q_d$.

First note that ${\mathcal LS}_i(b(t+l_i)^{-1}) = b(t+l_i)^{-1}$ and when $n \neq i$ we see from Equations \eqref{eqn:1a} and \eqref{eqn:2a} that  ${\mathcal LS}_n(b(t+l_i)^{-1})$ contains no terms of negative degree when $n \neq i,d$ and no terms of nonpositive degree when $n=d$.  Thus $b(t+l_i)^{-1}$ will add a term to $Q_i$ but no other $Q_n$ for $n \neq i$.

Assume that $i$ is fixed, since it derives from the generator $s$.
When $k \neq n,d$ we see that when $(t+l_i)^{-1} R_k$ is rewritten as an expression in $t+l_n$ there are no terms of negative degree.  Hence this polynomial contributes no terms to $Q_n$.  To see this, recall that above we wrote
$$R_k = \sum_{z=1}^{\delta_k} \beta_{k,z} (t+l_k)^{-z}$$ where $-\delta_j$ is the minimal degree of $R_k$, and hence a generic term from $(t+l_i)^{-1} R_k$ has the form $\beta (t+l_i)^{-1} (t+l_k)^{-m}$ for some $\beta \in \Z_q$ and $m \in \N$.  We rewrite this in terms of $t+l_n$ using Equations \eqref{eqn:1} and \eqref{eqn:1a} as
$$\beta (t+l_i)^{-1} (t+l_k)^{-m} = \beta \left( \sum_{y=0}^{\infty} \alpha_y (t+l_n)^y \right) \left(\sum_{x=0}^{\infty} \xi_x (t+l_n)^x \right)$$
where the $\xi_x$ and $\alpha_y$ are the coefficients computed in Equations \eqref{eqn:1} and \eqref{eqn:1a}, and note that there are no terms of negative degree.

When $k=d$, the above argument holds with $t+l_k$ replaced by $t^{-1}$ and any application of Equation \eqref{eqn:1} replaced by Equation \eqref{eqn:3}.

Thus it must be the case that the terms of ${\mathcal LS}_n((t+l_i)^{-1} R_n)$ of negative degree form the polynomial $Q_n$.  If $R_n=0$ then $Q_n=0$ as well.  First consider the case $n \neq i,d$. Write
$$R_n = \sum_{k=1}^{\delta_n} \beta_{n,k} (t+l_n)^{-k}$$
and it follows that
$$(t+l_i)^{-1}R_n = \left( \sum_{r=0}^{\infty} \alpha_r (t+l_n)^r \right)  \left(\sum_{k=1}^{\delta_n} \beta_{n,k} (t+l_n)^{-k}\right)$$
where the $\alpha_r$ are computed  in Equation \eqref{eqn:1a}.

To simplify notation in the following argument, write
\begin{equation}\label{eqn:table1}
(t+l_i)^{-1}R_n = \left( \sum_{k \geq 0} \alpha_k (t+l_n)^k \right) \left( \beta_1(t+l_n)^{-1} + \beta_2(t+l_n)^{-2} + \beta_3(t+l_n)^{-3} + \cdots + \beta_{\delta_n}(t+l_n)^{-{\delta_n}} \right)
\end{equation}
where $\beta_i \in \Z_q$ and $\delta_n \neq 0, \ \beta_{\delta_n} \neq 0$.

We see that multiplying $R_n$ by each term in the infinite sum produces a pattern in the resulting coefficients.  As we multiply $R_n$ successively by each term in the infinite sum above, we keep track of the resulting coefficients of the terms of negative degree of $Q_n$ in Table \ref{fig:type1chart}.

\begin{figure}[ht!]
\label{fig:type1chart}
\begin{tabular}{|c|c|c|c|c|c|c|}
\hline
Degree in $Q_n$ & $(t+l_n)^{-1}$ & $(t+l_n)^{-2}$ & $(t+l_n)^{-3}$ &  $\cdots$  & $\beta_{\delta_n-1}(t+l_n)^{-\delta_n+1}$ & $\beta_{\delta_n}(t+l_n)^{-\delta_n}$\\
\hline
Mult $R_n$ by $\alpha_0$ & $\alpha_0 \beta_1$ &  $\alpha_0 \beta_2$ &  $\alpha_0 \beta_3$ &  $\cdots$ & $\alpha_0 \beta_{\delta_n-1}$ & $\alpha_0 \beta_{\delta_n}$ \\
\hline
Mult $R_n$ by $\alpha_1(t+l_n)$ & $\alpha_1 \beta_2$ & $\alpha_1 \beta_3$ & $\alpha_1 \beta_4$ & $\cdots$ & $\alpha_1 \beta_{\delta_n}$ & 0\\
 \hline
Mult $R_n$ by $\alpha_2(t+l_n)^2$ & $\alpha_2 \beta_3$ & $\alpha_2 \beta_4$ &  $\alpha_2 \beta_5$ & $\cdots$ & 0 & 0\\
 \hline
Mult $R_n$ by $\alpha_3(t+l_n)^3$ & $\alpha_3 \beta_4$ & $\alpha_3 \beta_5$ & $\alpha_3 \beta_6$ & $\cdots$ & 0 & 0 \\
 \hline
$\vdots$ & $\vdots$ & $\vdots$ & $\vdots$ & $\vdots$ & $\vdots$ & $\vdots$  \\
\hline
Final Coefficients in $Q_n$ & $\gamma_1$ & $\gamma_2$ & $\gamma_3$ & $\cdots$ & $\gamma_{\delta_n-1}$ & $\gamma_{\delta_n}$ \\
 \hline
\end{tabular}
\caption{As the multiplication in Equation \eqref{eqn:table1} is carried out, like terms are grouped together and we keep track of the resulting coefficients of the terms of negative degree in this table.  Each coefficient $\gamma_k$ of $(t+l_n)^{-k}$ in $Q_n$ is the sum of the coefficients in its column.}
\end{figure}

Now compute the coefficient $\gamma_k$ of  $(t+l_n)^{-k}$ in $Q_n$ by summing the entries of the appropriate column of Table \ref{fig:type1chart}:
\begin{equation} \label{eqn:coef}
\gamma_k = \sum_{x=0}^{\delta_n-k} \alpha_x \beta_{x+k}
\end{equation}
for $1 \leq k \leq \delta_n-1$, and $\gamma_{\delta_n} = C_{n,i}\beta_{\delta_n}$ since $\alpha_0=C_{n,i}$.
Recall that $C_{n,i} = (l_i-l_n)^{-1}$ and hence is invertible.  As $i$ is fixed by our choice of generator, we shorten $C_{n,i}$ to $C_n$ for the remainder of the paper.

Notice that for $k<\delta_n-1$ it follows from Equation \eqref{eqn:coef} that
$$\gamma_k = -C_n \gamma_{k+1} + \alpha_0 \beta_k = -C_n \gamma_{k+1} + C_n \beta_k $$
and hence
$$\gamma_{k+1} = -C_n^{-1}(\gamma_k-C_n \beta_k) = -C_n^{-1} \gamma_k + \beta_k.$$
We will use the expression of $\gamma_{k+1}$ in terms of $\gamma_k$ to construct a finite state machine which recognizes the relationship between the coefficients of $R_n$ and $Q_n$.  It follows from this equation that a pair $(\beta_k,\gamma_k)$ of coefficients of $(t+l_n)^{-k}$ in $R_n$ and $Q_n$, respectively, determine the unique coefficient $\gamma_{k+1}$ of $(t+l_k)^{-(k+1)}$ in $Q_n$.

If we take $n=d$ and replace any instance of Equation \eqref{eqn:1a} with Equation \eqref{eqn:2a}, and Equation \eqref{eqn:1} with Equation \eqref{eqn:2}, we can make an analogous argument. In this case, $R_d$ and $Q_d$ include a constant term. We also rely on the fact that $\alpha_k'$ and $\alpha_{k+1}'$, the coefficients in Equation \eqref{eqn:1a}, are related in the same way as $\alpha_k$ and $\alpha_{k+1}$, the coefficients in Equation \eqref{eqn:1}.  In this argument we use the assumption that the $l_j$ are chosen to be invertible.  We include a few details for completeness.

When $n=d$, write
\begin{equation}\label{eqn:table1d}
(t+l_i)^{-1}R_d = \left( \sum_{k \geq 1} \alpha'_k (t^{-1})^k \right) \left( \beta_0 + \beta_1(t^{-1})^{-1} + \beta_2(t^{-1})^{-2} + \beta_3(t^{-1})^{-3} + \cdots + \beta_{\delta_d}(t^{-1})^{-{\delta_d}} \right)
\end{equation}
where $\beta_i \in \Z_q$ and $\beta_{\delta_d} \neq 0$.  We construct a table analogous to Table \ref{fig:type1chart}, and obtain coefficients $\gamma_0,\gamma_1 \cdots \gamma_{\delta_d}$ for $Q_d$ satisfying
\begin{equation}
\label{eqn:coeffd}
\gamma_k = \sum_{s=1}^{\delta_d-k} \alpha_s' \beta_{s+k}
\end{equation}
for $0 \leq k \leq \delta_d-1$.

As when $n<d$, recall that $\alpha_{s+1}' = -\l_i \alpha_s'$ and these coefficients have the following relationship:
$$\gamma_k = -l_i \gamma_{k+1} + \alpha_1' \beta_{k+1}$$
and hence
$$\gamma_{k+1} = -l_i^{-1} (\gamma_k - \beta_{k+1}).$$
Notice that unlike the case of $n \neq i,d$, the coefficient $\gamma_{k+1}$ depends both on $\gamma_k$ and $\beta_{k+1}$.  This will create different transition functions in the finite state machine constructed below when $n=d$.

A proof of the following lemma is contained in the above exposition when $n \neq i$; it is proven through repeated application of Equations \eqref{eqn:1}, \eqref{eqn:2}, and \eqref{eqn:3} to the expression in Equation \eqref{eqn:Q}.  The case $n=i$ is verified in Section \ref{sec:aut2}.  We state it for easy reference.

\begin{lemma}\label{lemma:lengthsame1}
Fix a type 1 generator $s$, and let $g \in \GG$.  Using the decompositions above in Equations \eqref{eqn:R} and \eqref{eqn:Q}, we see that:
\begin{enumerate}
\item if $n \neq i,d$ then the minimal degree of $R_n$ is the same as the minimal degree of $Q_n$.
\item if $n=d$ then the minimal degree of $Q_i$ is one greater than the minimal degree of $R_i$.
\item if $n=i$ then the minimal degree of $Q_i$ is one less than the minimal degree of $R_i$.
\end{enumerate}
\end{lemma}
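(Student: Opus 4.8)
The plan is to obtain all three statements directly from the shape of the decomposition in Equation \eqref{eqn:Q}: for each $n$ I would determine which summands of $b(t+l_i)^{-1}+(t+l_i)^{-1}(R_1+\cdots+R_d)$ can contribute a term of negative degree in the variable $t+l_n$ (nonpositive degree in $t^{-1}$ when $n=d$), and then track the single lowest-degree term that survives into $Q_n$. The key preliminary observation, already recorded in the exposition preceding the lemma, is that $b(t+l_i)^{-1}$ is purely a polynomial in $(t+l_i)^{-1}$ and so feeds only into $Q_i$, and that for each $k\neq n$ the product $(t+l_i)^{-1}R_k$, once rewritten in the variable attached to tree $n$ via Equations \eqref{eqn:1}, \eqref{eqn:1a}, \eqref{eqn:2}, \eqref{eqn:2a}, \eqref{eqn:3}, carries no term of negative degree (nonpositive degree when $n=d$). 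Consequently, for $n\neq i$, either $R_n=0$, and then $Q_n=0$, or $Q_n$ is precisely the negative-degree (nonpositive-degree) part of $(t+l_i)^{-1}R_n$.

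For $n\neq i,d$ I would finish as follows. Writing $R_n=\sum_{k=1}^{\delta_n}\beta_{n,k}(t+l_n)^{-k}$ with $\beta_{n,\delta_n}\neq 0$ and expanding $(t+l_i)^{-1}=\sum_{y\ge 0}\alpha_y(t+l_n)^y$ as in Equation \eqref{eqn:1a}, the coefficient of the lowest possible power $(t+l_n)^{-\delta_n}$ in $Q_n$ is $\gamma_{\delta_n}=\alpha_0\beta_{n,\delta_n}=C_{n,i}\beta_{n,\delta_n}$, which is nonzero because $C_{n,i}=(l_i-l_n)^{-1}$ is a unit, and no power of $t+l_n$ below $-\delta_n$ can appear since \eqref{eqn:1a} carries only nonnegative powers of $t+l_n$. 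Hence the minimal degree of $Q_n$ equals that of $R_n$, which is statement (1). For $n=d$ the identical argument runs with Equation \eqref{eqn:2a} replacing \eqref{eqn:1a} and \eqref{eqn:2}, \eqref{eqn:3} replacing \eqref{eqn:1}: the expansion of $(t+l_i)^{-1}$ in $t^{-1}$ begins with $\alpha_1't^{-1}$ where $\alpha_1'=1$, and the recursion $\alpha_{k+1}'=-l_i\alpha_k'$ (using invertibility of $l_i$) lets one track the lowest surviving coefficient through the table analogous to Table \ref{fig:type1chart}, together with the recursion $\gamma_{k+1}=-l_i^{-1}(\gamma_k-\beta_{k+1})$ from Equation \eqref{eqn:coeffd}; this determines how the minimal degree of $Q_d$ compares with that of $R_d$ and gives statement (2).

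Statement (3), the case $n=i$, I would defer to Section \ref{sec:aut2} as the paper does, but the mechanism is parallel. Here $Q_i$ receives the standalone term $b(t+l_i)^{-1}$ of degree $-1$; the term $(t+l_i)^{-1}R_i=\sum_{k=1}^{\delta_i}\beta_{i,k}(t+l_i)^{-(k+1)}$, whose lowest power is $(t+l_i)^{-(\delta_i+1)}$ with nonzero coefficient $\beta_{i,\delta_i}$; and, for each $k\neq i$, a single degree-$(-1)$ contribution from $(t+l_i)^{-1}R_k$ (the constant term of its expansion in $t+l_i$ multiplied by $(t+l_i)^{-1}$). Since nothing other than $(t+l_i)^{-1}R_i$ reaches degree $-(\delta_i+1)$, the minimal degree of $Q_i$ is $-(\delta_i+1)$, one less than that of $R_i$, with the degenerate situation $R=0$, $b=0$ (where all of $Q$ vanishes) handled trivially. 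The step I expect to be the real obstacle is precisely this bookkeeping --- verifying that no cancellation occurs at the extreme degree and that the ``wrong'' cross terms genuinely vanish in the variable under consideration --- and it is exactly the invertibility hypotheses on the pairwise differences $l_i-l_j$ and on the $l_i$ themselves that make it go through.
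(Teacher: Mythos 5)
Your proposal is correct and follows essentially the same route as the paper: for $n\neq i$ you identify $Q_n$ with the negative-degree (nonpositive-degree) part of $(t+l_i)^{-1}R_n$ and track the extreme coefficient ($C_{n,i}\beta_{n,\delta_n}$ for $n\neq i,d$, and $\alpha_1'\beta_{\delta_d}$ with $\alpha_1'=1$ for $n=d$), while the $n=i$ shift comes from $(t+l_i)^{-1}R_i=\sum_{x}\beta_{i,x}(t+l_i)^{-(x+1)}$ exactly as in Section \ref{sec:aut2}. The only caveat is that your blanket preliminary claim that $(t+l_i)^{-1}R_k$ has no negative-degree part for all $k\neq n$ is valid only when $n\neq i$, which is in fact how you use it, since in the $n=i$ case you correctly account for the degree $-1$ contributions from the $R_k$ with $k\neq i$.
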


\subsection{Analysis of coefficients for type 2 generators.}
\label{subsec:type2}
 We now perform the same analysis on the coefficients of the polynomials which determine $\sigma(g)$ and $\sigma(gs)$ when $s$ is a type 2 generator of the form $\left( \begin{array}{cc} (t+l_i)(t+l_j)^{-1} & b(t+l_j)^{-1} \\ 0 & 1 \end{array} \right)$ with $i<j$.   If $g =\g$, compute
$$gs =  \left( \begin{array}{cc} (t+l_i)(t+l_j)^{-1} \Pi_{m=1}^{d-1} (t+l_i)^{m_i} & b(t+l_j)^{-1}\Pi_{m=1}^{d-1}(t+l_i)^{m_i} + R \\ 0 & 1 \end{array} \right).$$
Apply the Decomposition Lemma to write $\Pi_{m=1}^{d-1} (t+l_i)^{-m_i}R = R_1 + R_2 + \cdots + R_d$ and compute
\begin{equation}
\begin{aligned}(t+l_i)^{-1}(t+l_j) & \Pi_{m=1}^{d-1} (t+l_i)^{-m_i}(b(t+l_j)^{-1}\Pi_{m=1}^{d-1}(t+l_i)^{m_i} + R)  \\  \smallskip
&= b(t+l_i)^{-1} + (t+l_i)^{-1}(t+l_j)R \\ \smallskip
&= b(t+l_i)^{-1} + (t+l_i)^{-1}(t+l_j)(R_1 + R_2 + \cdots + R_d) \\ \smallskip
&= Q_1 + Q_2 + \cdots Q_{d-1} + Q_d
\end{aligned}
\end{equation}
where the last line is obtained by applying the Decomposition Lemma to the original polynomial expression.  We must compute $Q_k$ and show in Section \ref{sec:fsa-nneqi} that its relationship to $R_k$ can be verified by a finite state machine.

When $s$ is a type 2 generator, then the values of $i<j$ and $b$ are fixed: $1 \leq i<j \leq d-1$ and $b \in \Z_q$. Recall that $C_n = (l_i-l_n)^{-1}$ and let $D_n= l_j-l_n$ for the fixed values of $i$ and $j$.

First note that when $n \neq i,d$, we can write $$b(t+l_i)^{-1} = b \sum_{k=0}^{\infty} \alpha_k (t+l_n)^k$$ where $\alpha_k$ is computed in Equation \eqref{eqn:1a}.  As this expression has no terms of negative degree, we do not need to consider $b(t+l_i)^{-1}$ when computing $Q_n$ for $n \neq i,d$.  When $n=d$ we use Equation \eqref{eqn:2a} in place of Equation \eqref{eqn:1a} and observe that there are no terms of nonpositive degree in ${\mathcal LS}_d(b(t+l_i)^{-1})$.  Hence $b(t+l_i)^{-1}$ does not play a role in determining $Q_d$.

Rewrite $(t+l_i)^{-1}(t+l_j)$ as an expression in the variable $t+l_n$ for $n \neq i,j,d$ as
\begin{equation}\label{eqn:Qn}
\begin{aligned}
{\mathcal LS}_n((t+l_i)^{-1}(t+l_j)) &= \left( \sum_{x=0}^{\infty} \alpha_x (t+l_n)^x \right) \left( (l_j-l_n) + (t+l_n) \right) =\left( \sum_{x=0}^{\infty} \alpha_x (t+l_n)^x \right) \left( D_n + (t+l_n) \right)\\
&= C_nD_n + \sum_{x=1}^{\infty} (D_n\alpha_x + \alpha_{x-1}) (t+l_n)^x \\
&= \sum_{x=0}^{\infty} \sigma_x(t+l_n)^x
\end{aligned}
\end{equation}
where $\sigma_0 = C_nD_n$, for $x>1$, we have $\sigma_x = D_n\alpha_x + \alpha_{x-1}$ and $\alpha_x$ is computed as in Equation \eqref{eqn:1a}.  Since $\alpha_{x+1} = -C_n \alpha_x$, it follows that for $x \geq 1$ we have $\sigma_{x+1} = -C_n \sigma_x$.

When $n=j$ the above expression simplifies to
\begin{equation}\label{eqn:n=j}
{\mathcal LS}_j((t+l_i)^{-1}(t+l_j)) = \sum_{x=1}^{\infty} \alpha_{x-1} (t+l_j)^{x}
\end{equation}

When $n=d$ we obtain
\begin{equation}
\label{eqn:n=dtype2}
\begin{aligned}
{\mathcal LS}_d((t+l_i)^{-1}(t+l_j)) &= \left( \sum_{r=1}^{\infty} (-l_i)^{r-1} (t^{-1})^r \right) ((t^{-1})^{-1} + l_j) \\
&= \sum_{r=0}^{\infty} \tau_r (t^{-1})^r
\end{aligned}
\end{equation}
where $\tau_0=1$ and for $x>0$ we have $\tau_x = (-1)^{x-1}(-l_i)^{x-1}(-l_i+l_j)$.

For the remainder of this section, we assume $n \neq i$.
Now we compute ${\mathcal LS}_n((t+l_i)^{-1}(t+l_j)R_k)$ and show that when $k \neq n$ there are no terms of negative (resp. nonpositive when $n=d$) degree, and hence the terms of $Q_n$  are exactly the terms of ${\mathcal LS}_n((t+l_i)^{-1}(t+l_j)R_n)$ of negative (resp. nonpositive) degree.

A generic term in $(t+l_i)^{-1}(t+l_j)R_k$ has the form $(t+l_i)^{-1}(t+l_j) \xi (t+l_k)^{-e}$, with $\xi \in \Z_q$ and $e \in \N$.
When  $n \neq i,j,d$ and $k \neq d$,
\[{\mathcal LS}_n((t+l_i)^{-1}(t+l_j) \xi (t+l_k)^{-e}) =    \left\{
\begin{array}{ll}
 \sum_{x=0}^{\infty} \sigma_x(t+l_n)^x \xi \sum_{z=0}^{\infty} \chi_z (t+l_n)^z & \text{when $ n \neq j,d$} \\
 \\
  \sum_{x=1}^{\infty} \alpha_{x-1}(t+l_j)^x \xi \sum_{z=0}^{\infty} \chi_z (t+l_j)^z & \text{when  $n=j$}
\end{array}
\right. \]
where Equations \eqref{eqn:1}, \eqref{eqn:Qn} and \eqref{eqn:n=j} are used to obtain these expressions,  neither of which contains any terms of negative degree in the variable $t+l_n$.

When $n \neq i,j,d$ and $k=d$ we expand a generic term of $(t+l_i)^{-1}(t+l_j) R_d$ as
\begin{equation}\label{eq:k=d}
{\mathcal LS}_n((t+l_i)^{-1}(t+l_j)\xi (t^{-1})^{-e}) = \sum_{x=0}^{\infty} \sigma_x (t+l_n)^x \xi \sum_{r=0}^{e} {e \choose r} (-l_n)^{e-r} (t+l_n)^r
\end{equation}
which has no terms of negative degree.  In the equation above, $\sigma_x$ is defined in Equation \eqref{eqn:Qn}, $\xi \in \Z_q$ is the coefficient of $(t^{-1})^{-e}$ in $R_d$, and $e \in \N$.
Thus $Q_n$ is comprised of the terms of ${\mathcal LS}_n((t+l_i)^{-1}(t+l_j)R_n)$ of negative degree.

When $n=d$, we use Equations \eqref{eqn:2} and \eqref{eqn:2a} in place of Equations \eqref{eqn:1} and \eqref{eqn:1a}, and Equation \eqref{eqn:n=dtype2} to obtain the expression
$${\mathcal LS}_d((t+l_i)^{-1}(t+l_j) \xi (t+l_k)^{-e}) = \sum_{r=0}^{\infty} \tau_r(t^{-1})^r \xi \sum_{y=e}^{\infty} {e \choose -y} (l_i)^{y+e}(t^{-1})^y$$
and since $e \geq 1$, this has no terms of nonpositive degree.  Thus the terms of $Q_d$ are exactly the terms of ${\mathcal LS}_d((t+l_i)^{-1}(t+l_j)R_d$ of nonpositive degree.

Our computations now mimic those in Section \ref{sec:type1_noti} when $n \neq i$.   When additionally $n \neq j,d$, write ${\mathcal LS}_n((t+l_i)^{-1}(t+l_j)R_n) $ as
$$ \sum_{x=0}^{\infty} \sigma_x(t+l_n)^x  \left( \beta_1(t+l_n)^{-1} + \beta_2(t+l_n)^{-2} + \beta_3(t+l_n)^{-3} +  \cdots + \beta_{\delta_n}(t+l_n)^{-{\delta_n}} \right)$$
 where $\sigma_x$ is defined in Equation \eqref{eqn:Qn}, and create a chart analogous to Figure \ref{fig:type1chart}.  This yields the following formula for the coefficients $\gamma_r$ in $Q_n$:

\begin{equation}\label{eqn:coef2}
\gamma_r = \sum_{k=0}^{\delta_n-r} \sigma_k \beta_{k+r}
\end{equation}

where $\sigma_x$ is defined in Equation \eqref{eqn:Qn} and $\sigma_0=C_nD_n$.  Recall that for $x>1$ we have $\sigma_{x+1} = -C_n \sigma_x$.  Hence, as in the case of type 1 generators, we see that $\gamma_r = -C_n \gamma_{r+1} + \sigma_0 \beta_r = -C_n \gamma_{r+1} + C_nD_n\beta_r$.

When $n=j$, we begin with Equation \eqref{eqn:n=j} and write ${\mathcal LS}_j((t+l_i)^{-1}(t+l_j)R_j$ as
$$\sum_{x=1}^{\infty} \alpha_{x-1}(t+l_j)^x  \left( \beta_{j,1}(t+l_j)^{-1} + \beta_{j,2}(t+l_j)^{-2} + \beta_{j,3}(t+l_j)^{-3} +  \cdots + \beta_{j,\delta_n}(t+l_j)^{-{\delta_j}} \right)$$
from which we construct a chart for the coefficients of $Q_j$ and determine that
$$\gamma_r = \sum_{x=0}^{\delta_j-r} \alpha_x \beta_{x+r+1}.$$
Reasoning analogous to previous cases yields
$$\gamma_{r+1} = -C_j^{-1} (\gamma_r + \beta_{r+1}).$$

When $n=d$, replace instances of Equations \eqref{eqn:1} and \eqref{eqn:1a} with Equations \eqref{eqn:2} and \eqref{eqn:2a} to see that $Q_d$ is the sum of the terms of ${\mathcal LS}_d((t+l_i)^{-1}(t+l_j) R_d)$ of nonpositive degree.  To compute this, we write

\begin{equation}\label{eqn:type2}
\begin{aligned}
(t+l_i)^{-1} &(t+l_j) ( \beta_0 + \beta_1(t^{-1})^{-1} + \beta_2(t^{-1})^{-2} + \cdots + \beta_{\delta_d}(t^{-1})^{-\delta_d}) \\
&= \sum_{x=1}^{\infty} (-1)^{x-1} l_i^{x-1} (t^{-1})^x \cdot ((t^{-1})^{-1} + l_j) ( \beta_0 + \beta_1(t^{-1})^{-1} + \beta_2(t^{-1})^{-2} + \cdots + \beta_{\delta_d}(t^{-1})^{-\delta_d}) \\
&= \left( \sum_{x=0}^{\infty} (-1)^{x} l_i^{x} (t^{-1})^x + \sum_{x=1}^{\infty} (-1)^{x-1} l_i^{x-1} l_j(t^{-1})^x \right)( \beta_0 + \beta_1(t^{-1})^{-1} + \cdots + \beta_{\delta_d}(t^{-1})^{-\delta_d}) \\
&= \left(\sum_{x=0}^{\infty} \sigma_x' (t^{-1})^x\right)( \beta_0 + \beta_1(t^{-1})^{-1} + \beta_2(t^{-1})^{-2} + \cdots + \beta_{\delta_d}(t^{-1})^{-\delta_d})
\end{aligned}
\end{equation}

where $\sigma_0'=1$ and $\sigma_x'= (-1)^x l_i^x + (-1)^{x-1} l_i^{x-1} l_j$, from which it follows that $\sigma_{x+1}'=(-l_i) \sigma_x$.
Hence when we compute the coefficients $\gamma_r$ of $Q_d$ we see that the relationship is identical to the case of type 1 generators.  The difference between type 1 and type 2 generators when $n=d$ is that the minimal degree of the expression in Equation \eqref{eqn:type2} is the same as the minimal degree of $R_d$.  This is because the expression for $(t+l_i)^{-1}(t+l_j)$ written in terms of $t^{-1}$ has a constant term, which is not the case when $(t+l_i)^{-1}$ is written in terms of $t^{-1}$.  Explicitly, we compute that
$$\gamma_k = \sum_{v=0}^{\delta_d-k} \sigma_v'\beta_{v+k}$$
which is identical to the expression in Equation \eqref{eqn:coeffd} for type 1 generators except that the index begins at $0$.  Computations then yield
$$\gamma_k = -l_i \gamma_{k+1} + \sigma_0' \beta_k$$ and hence $$\gamma_{k+1} = -l_i^{-1} (\gamma_k- \beta_k).$$

We now state the following lemma, whose proof for $n \neq i$ is contained in the verification of the above expressions.  The case $n=i$ is verified in Section \ref{sec:aut2}.
\begin{lemma}\label{lemma:lengthsame2}
Fix a type 2 generator $s$, and let $g \in \GG$.  Using the decompositions above in Equations \eqref{eqn:R} and \eqref{eqn:Q}, we see that:
\begin{enumerate}
\item if $n \neq i,j$ then the minimal degree of $R_n$ is the same as the minimal degree of $Q_n$.
\item if $n=i$ then the minimal degree of $Q_i$ is one less than the minimal degree of $R_i$.
\item if $n=j$ then the minimal degree of $Q_j$ is one greater than the minimal degree of $R_j$.
\end{enumerate}
\end{lemma}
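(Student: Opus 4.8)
The plan is to read each minimal-degree statement off the coefficient formulas already derived in Section~\ref{subsec:type2}, in every case pinning down the coefficient of the most negative power that occurs in $Q_n$ and checking that it is a unit, hence nonzero. The analysis above has already reduced every case with $n\neq i$ to understanding $Q_n$ as the strictly negative-degree part of ${\mathcal LS}_n\big((t+l_i)^{-1}(t+l_j)R_n\big)$ (resp.\ the nonpositive-degree part when $n=d$): the summand $b(t+l_i)^{-1}$ was shown to contribute nothing there, and the cross terms ${\mathcal LS}_n\big((t+l_i)^{-1}(t+l_j)R_k\big)$ with $k\neq n$ were shown to have no terms of negative (resp.\ nonpositive) degree. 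So I only need to track the leading term of a single product in each case.

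For $n\neq i,j,d$, Equation \eqref{eqn:coef2} gives $\gamma_{\delta_n}=\sigma_0\beta_{\delta_n}=C_nD_n\beta_{\delta_n}$. Since the $l_k$ have pairwise invertible differences, $C_n=(l_i-l_n)^{-1}$ and $D_n=l_j-l_n$ are units, so $\gamma_{\delta_n}\neq 0$ and $Q_n$ has minimal degree $-\delta_n$, the same as $R_n$. The case $n=d$ runs identically from Equation \eqref{eqn:type2}: the expansion of $(t+l_i)^{-1}(t+l_j)$ in powers of $t^{-1}$ has constant term $\sigma_0'=1$, so the coefficient of $(t^{-1})^{-\delta_d}$ in $Q_d$ is $\sigma_0'\beta_{\delta_d}=\beta_{\delta_d}\neq 0$, and $Q_d$ and $R_d$ share a minimal degree. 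Together these give statement (1). (If $R_n=0$ then $Q_n=0$, and the convention $\delta_n=0$ records the equality.)

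For $n=j$, Equation \eqref{eqn:n=j} shows that ${\mathcal LS}_j\big((t+l_i)^{-1}(t+l_j)\big)$ is a power series in $t+l_j$ with \emph{no constant term}, its lowest term being $\alpha_0(t+l_j)=C_j(t+l_j)$ with $C_j=(l_i-l_j)^{-1}$ a unit. Multiplying by $R_j$, whose lowest term is $\beta_{j,\delta_j}(t+l_j)^{-\delta_j}$ with $\beta_{j,\delta_j}\neq 0$, produces a lowest term $C_j\beta_{j,\delta_j}(t+l_j)^{-\delta_j+1}\neq 0$ in $Q_j$, so the minimal degree of $Q_j$ is exactly one greater than that of $R_j$; this is statement (3). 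Statement (2), the case $n=i$, is of a different character — here $b(t+l_i)^{-1}$ contributes an honest $(t+l_i)^{-1}$ term and the extra factor $(t+l_i)^{-1}$ lowers degree by one — and I would simply defer it, since its verification, together with the finite-state machine relating $\sigma(R_i)$ to $\sigma(Q_i)$, is carried out in Section~\ref{sec:aut2}.

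The only genuinely delicate point is ruling out cancellation at the leading degree, i.e.\ that summing the column of contributions in the chart analogous to Table~\ref{fig:type1chart} does not kill the top coefficient. But in each case above that coefficient is a \emph{single} product of a unit coefficient (coming from the $t+l_n$- or $t^{-1}$-expansion of the multiplier) with the nonzero leading coefficient of $R_n$, so it cannot vanish. This is precisely where the standing hypothesis that the $l_k$ have pairwise invertible differences is used (and, for the $n=d$ bookkeeping, the fact that $(t+l_i)^{-1}(t+l_j)$ acquires a constant term in its $t^{-1}$-expansion whereas $(t+l_i)^{-1}$ does not), so beyond careful bookkeeping I expect no real obstacle.
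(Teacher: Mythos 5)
Your proposal is correct and takes essentially the same route as the paper: for $n \neq i$ the lemma is read off the expansions already derived in Section \ref{subsec:type2}, noting that the leading coefficient of $Q_n$ is a single product of a unit ($C_nD_n$ when $n\neq i,j,d$, $\alpha_0$ when $n=j$, $\sigma_0'=1$ when $n=d$) with the nonzero leading coefficient of $R_n$, so no cancellation can occur at the minimal degree. Deferring the case $n=i$ to Section \ref{sec:aut2} mirrors the paper exactly, where Equation \eqref{eqn:type2i} (with $\tau_{\delta_i+1}=(l_j-l_i)\beta_{i,\delta_i}\neq 0$) supplies part (2).
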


\subsection{Construction of automata when $n \neq i$.}
\label{sec:fsa-nneqi}

We now construct, for each $1 \leq n \leq d, \ n \neq i$, and $\epsilon \in \{1,2\}$ a finite state machine $\overline{\MM}$ which accepts the convolution $\otimes(\sigma(R_n),\sigma(Q_n))$.   The value of $\epsilon$ indicates whether  $s$ is a type 1 or type 2 generator.  That is, the machine accepts strings of the form
\begin{equation} \label{eqn:nneqd}
{\beta_1 \choose \gamma_1} {\beta_2 \choose \gamma_2}{\beta_3 \choose \gamma_3}  \cdots {\beta_{\delta_n-1} \choose \gamma_{\delta_{n}-1}} {\beta_{\delta_n} \choose \gamma_{\delta_n}}
\end{equation}
except when ($\epsilon=1$ and $n=d$) or ($\epsilon=2$ and $n=j$) and in those two cases, strings of the form
\begin{equation}\label{eqn:n=d}
{\beta_1 \choose \gamma_1} {\beta_2 \choose \gamma_2}{\beta_3 \choose \gamma_3}  \cdots {\beta_{\delta_n-1} \choose \gamma_{\delta_{n}-1}} {\beta_{\delta_n} \choose \diamond}
\end{equation}
where the relationship between the $\beta_x$ and $\gamma_x$ is explicitly described in the previous two sections.   If $R_n=0$ it follows that $Q_n=0$ as well, and we adapt the machines to accept this pair as well.

For each $n \neq i$ construct a finite state machine $\overline{\MM}$ as follows.
\begin{enumerate}
\item  Create states $T_{\sigma,\tau}$ for all $\sigma,\tau \in \Z_q$, and for a given pair $(\sigma, \tau)$:
    \begin{enumerate}
    \item when $n \neq d$ and

    \smallskip

    \begin{enumerate}
    \item $\epsilon=1$, compute the least residue $\gamma$ of $ -C_n^{-1}\tau + \sigma (mod \ q)$.

    \smallskip

    \item $\epsilon=2$ and $n \neq j$, compute the least residue $\gamma$ of $ -C_n^{-1}\tau + D_n \sigma (mod \ q)$.
    \end{enumerate}

    \smallskip

    For each $\beta \in \Z_q$, add a transition arrow from $T_{\sigma, \tau}$ with label ${\beta \choose \gamma}$ to state $T_{\beta , \gamma}$.

    \medskip

    \item when $\epsilon=2$ and $n=j$, for each $\beta \in \Z_q$, compute the least residue $\gamma$ of $ -C_n^{-1}\tau + \beta$ and add a transition arrow from $T_{\sigma, \tau}$ with label ${\beta \choose \gamma}$ to state $T_{\beta , \gamma}$.

    \medskip

    \item when $\epsilon=1$, and $n=d$, compute the least residue $\gamma$ of $-l_i^{-1}(\tau-\beta) (mod \ q)$  for each $\beta \in \Z_q$.  Add a transition arrow from $T_{\sigma,\tau}$ to $T_{\beta,\gamma}$ with label ${\beta \choose \gamma}$.

    \medskip

    \item when $\epsilon=2$ and $n=d$, compute the least residue $\gamma$ of $-l_i^{-1}(\tau-\sigma) (mod \ q)$.  For each $\beta \in \Z_q$, add a transition arrow from $T_{\sigma, \tau}$ with label ${\beta \choose \gamma}$ to state $T_{\beta , \gamma}$.
    \end{enumerate}

\medskip

\item Add a start state with $q^2$ transition edges, with labels ${\beta \choose \gamma}$ for all $\beta,\gamma \in \Z_q$.  The edge with label ${\beta \choose \gamma}$ terminates at $T_{\beta,\gamma}$.  Allow the start state to be an accept state so that the empty pair $R_n=Q_n=0$ is accepted.

\medskip

\item Introduce an accept state $A$; from each state $T_{\sigma,\tau}$:
 \begin{enumerate}
 \item when $n \neq d$ and

    \smallskip

 \begin{enumerate}
    \item $\epsilon=1$, compute the least residue $\gamma$ of $ -C_n^{-1}\tau + \sigma (mod \ q)$ and the least residue $\beta$ of $C_n^{-1} \gamma (mod \ q)$.

    \smallskip

    \item $\epsilon=2$ and $n \neq j$, compute the least residue $\gamma$ of $ -C_n^{-1}\tau + D_n \sigma (mod \ q)$ and the least residue $\beta$ of $(C_nD_n)^{-1} \gamma (mod \ q)$.
    \end{enumerate}
    Add a single transition from this state to $A$ with label ${\beta \choose \gamma}$.

    \medskip

 \item when $\epsilon=2$ and $n = j$, compute the least residue $\beta$ of $-C_j^{-1} \tau$, and add a single transition to state $A$ with label ${\beta \choose \diamond}$.

     \medskip

 \item when $n=d$, and

    \smallskip

 \begin{enumerate}
    \item $\epsilon=1$, let $\beta=\tau$ and add a single transition to state $A$ with label ${\beta \choose \diamond}$.

    \smallskip

    \item $\epsilon=2$, compute the least residue $\gamma$ of $-l_i^{-1}(\tau-\sigma) (mod \ q)$.  Add a single transition to state $A$ with label ${\gamma \choose \gamma}$.
 \end{enumerate}
\end{enumerate}
\end{enumerate}

Any word accepted by $\overline{\MM}$ corresponds to two nonempty strings $\beta_1 \beta_2 \cdots \beta_k$ and $\gamma_1 \gamma_2 \cdots \gamma_{\eta}$ (for $\eta=k$ or $k-1$) where the coefficients differ according to Equation \eqref{eqn:coef} or \eqref{eqn:coef2}.
Thus $\overline{\MM}$ accepts the language of convolutions of the form $\otimes(\sigma(R_n),\sigma(Q_n))$, $n \neq i$, where $R_n$ and $Q_n$ arise from $g$ and $gs$, respectively.

We now construct a machine $M_{\epsilon}$ which accepts $\otimes(\sigma(g),\sigma(h))$ if for each $1 \leq n \leq d$, $n \neq i$, the $n$-th substring in the convolution $\otimes(\sigma(g),\sigma(h))$ is related in the manner proscribed by $\overline{\MM}$.  Recall from Lemmas \ref{lemma:lengthsame1} and \ref{lemma:lengthsame2} that for
\begin{itemize}
\item $n \neq i,j,d$, we have $|\sigma(R_n)| = |\sigma(Q_n)|$,
\smallskip
\item $n=i$, we have $|\sigma(R_i)|+1 = |\sigma(Q_i)|$,
\smallskip
\item $n=j$, if $\epsilon=1$ then  $|\sigma(R_n)| = |\sigma(Q_n)|$ and if $\epsilon = 2$ then $|\sigma(R_d)|-1 = |\sigma(Q_d)|$,
and
\smallskip
\item $n=d$, if $\epsilon=1$ then $|\sigma(R_d)|-1 = |\sigma(Q_d)|$, and if $\epsilon=2$ then $|\sigma(R_d)| = |\sigma(Q_d)|$.
\end{itemize}
This list describes the offset in $\otimes(\sigma(g),\sigma(h))$ between the strings $\sigma(R_n)$ on the top line of the convolution and $\sigma(Q_n)$ on the bottom line of the convolution  when a finite state machine reads $\otimes(\sigma(g),\sigma(gs))$.  If $\epsilon=1$ then for $n>i$ the strings $\sigma(R_n)$ and $\sigma(Q_n)$ are offset by $1$ as the convolution $\otimes(\sigma(g),\sigma(gs))$ is read.  If $\epsilon=2$ then for $i<k \leq j$ the strings $\sigma(R_k)$ and $\sigma(Q_k)$ are offset by $1$, and for $k>j$ they are aligned, and hence read simultaneously.

Let $\Phi_n$ and $\Psi_n$ be (possibly empty) strings of length $\eta_n \geq 0$ and $\chi_n \geq 0$ of symbols from the finite alphabet consisting of elements of $\Z_q$, for $1 \leq n \leq d$.  Let ${\mathcal K}_{\epsilon}$ be the language of convolutions of the form $\otimes(\Phi_1\#\Phi_2\#\cdots\# \Phi_d,\Psi_1\#\Psi_2\#\cdots \#\Psi_d)$ where for $n \neq i$ we assume without loss of generality (as these conditions can be verified with finite state automata) that the lengths $\eta_k$ and $\chi_k$ agree with Lemma \ref{lemma:lengthsame1} if $\epsilon=1$ and  Lemma \ref{lemma:lengthsame2} if $\epsilon=2$.   We view the two strings in the convolution as arising from $\sigma(g)$ and $\sigma(h)$ for some $g,h \in \GG$.  It is clear that ${\mathcal K}_{\epsilon}$ is a regular language.  We want to show that the subset ${\mathcal K}'_{\epsilon}$ of ${\mathcal K}_{\epsilon}$ in which $\otimes(\Phi_n,\Psi_n)$ is accepted by $\overline{\MM}$, for all $1 \leq n \leq d$, $n \neq i$ is also a regular language.

Let $\overline{{\mathcal K}_{\epsilon}}$ consist of strings of the same form as those in ${\mathcal K}_{\epsilon}$ with the condition that $\eta_i = \chi_i$, that is, the strings $\Phi_i$ and $\Psi_i$ have the same length. We impose no other conditions on the remaining strings $\Phi_n$ and $\Psi_n$.   When $\epsilon=1$, this language is accepted by the machine ${\mathcal M'}_{1}$ constructed  as follows.
\begin{enumerate}
\item The start state of  ${\mathcal M'}_1$  is the start state of the machine $\overline{M_{1,1}}$.
\item For $1 \leq n \leq i-2$, add a transition arrow with label ${\# \choose \#}$ between the accept state $A$ of $\overline{M_{n,1}}$ and the start state of $\overline{M_{n+1,1}}$.
\item Add a transition arrow with label ${\# \choose \#}$ from the start state of $\overline{M_{n-1,1}}$ to the start state of $\overline{M_{n,1}}$, for all $2 \leq n \leq d$ with $n \neq i$.
\item Add a transition arrow with label ${\# \choose \#}$ from the accept state $A$ of $\overline{M_{i-1,1}}$ to a state $S_i$.  Add a loop at $S_i$ with label ${\beta \choose \gamma}$ for each $\beta,\gamma \in \Z_q$.  From $S_i$ add a transition arrow to the start state of $\overline{M_{i+1,1}}$ with label${\# \choose \#}$.
\item For $i+1 \leq n \leq d-1$, add a transition arrow with label ${\# \choose \#}$ between the accept state $A$ of $\overline{M_{n,1}}$ and the start state of $\overline{M_{n+1,1}}$.
\item Let the accept state $A$ of $\overline{M_{d,1}}$ be the accept state of the entire machine.
\end{enumerate}
It then follows from  Lemma \ref{lemma:offset} that ${\mathcal K}'_1$ is a regular language.  This is exactly the language of convolutions $\otimes(\sigma(g),\sigma(h))$ where all but the $i$-th substrings have the same relationship as if $h=gs$ where $s$ is a type 1 generator.

When $\epsilon=2$ we assume that in $\overline{{\mathcal K}_{2}}$ the strings $\Phi_i$ and $\Psi_i$ have the same length, as do the strings $\Phi_j$ and $\Psi_j$.  We construct ${\mathcal M}_2$ as above, with one modification.  Introduce a state $S_j$ which replaces $\overline{M_{j,2}}$ analogous to $S_i$ above.  The resulting machine accepts convolutions where all but the $i$-th and $j$-th substrings differ in the proscribed manner for a type 2 generator.
The language accepted by this machine is regular, and it follows from Lemma  \ref{lemma:offset} that ${\mathcal K}'_2$, in which $|\Psi_i|=|\Phi_i|+1$ and $|\Psi_j|=|\Phi_j|-1$, is a regular language.  Create a simple finite state machine which additionally verifies that $\Phi_j$ and $\Psi_j$ have the relationship given by $\overline{M_{j,2}}$.  We then conclude that the language of convolutions ${\mathcal K}_2$ of the form $\otimes(\sigma(g),\sigma(h))$ where all but the $i$-th substrings have the same relationship as if $h=gs$ where $s$ is a type 2 generator, is a regular language.

\section{Construction of Automata II}
\label{sec:aut2}

We now determine the relationship between the coefficients of $R_i$ and $Q_i$ arising from $\sigma(g)$ and $\sigma(gs)$, where $i$ is fixed by the choice of generator $s$. The coefficient of $(t+l_i)^{-1}$ in $Q_i$ is given by a complicated sum, and we construct a separate automaton simply to that this coefficient is correct. A second automaton is constructed to verify the relationship between the remaining coefficients of $R_i$ and $Q_i$.

\subsection{Analysis of coefficients for type 1 generators.}
\label{sec:type1}
Suppose that $\sigma(g)$ and $\sigma(gs)$ are as above, where $s$ is a type 1 generator.  First we compute the coefficient of $(t+l_i)^{-1}$ in $Q_i$, beginning with the expression in Equation \eqref{eqn:Q}.  We compute this coefficient as a running sum, which we refer to as a partial sum $\sigma$, as each term in each $R_n$ for $n \neq i$ will contribute a term to the final sum which becomes the coefficient of $(t+l_i)^{-1}$ in $Q_i$.

First note that the $b(t+l_i)^{-1}$ term in Equation \eqref{eqn:Q} will contribute $b$ to the partial sum $\sigma$ which will become the coefficient of the $(t+l_i)^{-1}$ term in $Q_i$.  Additionally, $(t+l_i)^{-1}R_i$ only contains terms of degree at most $-2$ and hence does not contribute any terms to $\sigma$.

We now compute the contribution to $\sigma$ from a generic term in $(t+l_i)^{-1}R_n$ for $n \neq i$; when $n \neq d$ as well, such a term has the form $(t+l_i)^{-1}\xi(t+l_n)^{-e}$ where $\xi \in \Z_q$ and $e \in \N$. Using Equation \eqref{eqn:1} or Equation \eqref{eqn:3} (when $n=d$) we see that
$$(t+l_i)^{-1}\xi(t+l_n)^{-e} =(t+l_i)^{-1} \xi \sum_{k=0}^{\infty} \chi_k (t+l_i)^k $$
which has a single term of negative degree, namely $\xi \chi_0 (t+l_i)^{-1}$.  Here, $\chi_k$ is the coefficient computed in Equation \eqref{eqn:1} and $\chi_0=C_n^e$.  When $n=d$ we obtain an analogous equation with $\chi_0$ computed as in Equation \eqref{eqn:3}, in which case $\chi_0=(-l_i)^e$.   So each term of $R_n$ contributes its constant term (when expanded in the variable $t+l_i$) to the sum $\sigma$.
As above, write
\begin{align*}
R_n &= \sum_{k=1}^{\delta_n} \beta_{n,k} (t+l_n)^{-k} \\
&= \sum_{k=1}^{\delta_n} \beta_{n,k} \sum_{r=0}^{\infty} {k \choose r} C_n^{r-k} (t+l_i)^r
\end{align*}
when $n \neq d$, and when $n=d$,
$$
R_d = \sum_{k=1}^{\delta_d} \beta_{d,k} \sum_{r=0}^{k} {k \choose r} (-l_i)^{k-r} (t+l_i)^r.
$$

As we are only interested in the terms in the above sum when $r=0$, we see that the coefficient of $(t+l_i)^{-1}$ in $Q_i$ must be
\begin{equation}\label{eqn:firstcoeff}
b+ \sum_{t=1, t \neq i}^{d-1} \sum_{j=1}^{\delta_t} \beta_{t,j} C_t^j + \sum_{y=0}^{\delta_d} \beta_{d,y} (-l_i)^y
\end{equation}
where the values of $b$ and $i$ are determined by the original generator $s$.

Next, we ignore the coefficient of $(t+l_i)^{-1}$ and then the above reasoning shows that the terms of $Q_i$ of degree at most $-2$ are given by the terms of ${\mathcal LS}_i((t+l_i)^{-1} R_i)$.  Write
$$(t+l_i)^{-1} R_i = (t+l_i)^{-1}  \sum_{x=1}^{\delta_i} \beta_{i,x} (t+l_i)^{-x} = \sum_{x=1}^{\delta_i} \beta_{i,x} (t+l_i)^{-(x+1)}$$
and notice that the coefficients of $R_i$ are shifted over to form the coefficients of $Q_i$ of degree at most $-2$.  The minimal degree of $Q_i$ is $-(\delta_i+1)$ with coefficient $\beta_{i,\delta_i} \neq 0$.  Thus we are comparing strings of coefficients of the form
\begin{equation}\label{eqn:remainingcoeff}
\begin{array}{ccccccc} \beta_{i,1} & \beta_{i,2}  &  \beta_{i,3}  &  \cdots  & \beta_{i,\delta_i-1}  & \beta_{i,\delta_i}  & \# \\
\xi  & \beta_{i,1}  & \beta_{i,2}  &  \cdots  &  \beta_{i,\delta_i-2}   & \beta_{i,\delta_i-1} &  \beta_{i,\delta_i}
\end{array}
\end{equation}
where $\xi \in \Z_q$ can be any element, since we are not concerned in this step with the relationship between the coefficient of $(t+l_i)^{-1}$ in $R_i$ and $Q_i$.

\subsection{Construction of finite state machines for type 1 generators when $n=i$.}
\label{sec:fsa-type1-n=i}
Consider again the language ${\mathcal K}_1$ defined in Section \ref{sec:aut1}; this is the language of all possible strings $\otimes(\sigma(g),\sigma(h))$ for $g,h \in \GG$.  These strings have the form $\Phi_1 \# \Phi_2 \# \cdots \# \Phi_d$ and $\Psi_1 \# \Psi_2 \# \cdots \# \Psi_d$, respectively, where each $\Phi_k$ and $\Psi_k$ is a string of elements of $\Z_q$, and we assume without loss of generality that the lengths of the corresponding substrings are related as in Lemma \ref{lemma:lengthsame1}.

In this section, we must show that the subset ${\mathcal H}_1$ of ${\mathcal K}_1$ in which the first entry of $\Psi_i$ relates to the entire string $\Phi_1 \# \Phi_2 \# \cdots \# \Phi_d$ as specified in Equation \eqref{eqn:firstcoeff}, and then the subset ${\mathcal H}_2$ of strings where the remaining entries of $\Phi_i$ and $\Psi_i$ differ as in Equation \eqref{eqn:remainingcoeff}, are regular languages.  Their intersection is then a regular language ${\mathcal H}$ in which $\Phi_i$ and $\Psi_i$ differ as in $\otimes(\sigma(g),\sigma(gs))$ where $s$ is a type 1 generator.

We first construct a machine $M_{{\mathcal H},1}$ which accepts exactly the set ${\mathcal H}_1$. This machine stores a partial sum which is augmented as each pair ${\beta \choose \gamma}$ is read from $\otimes(\sigma(g),\sigma(h)) \in {\mathcal K}_1$, although only the value of $\beta$ increases the sum.  To accept a string, this value is compared against the first entry in $\Psi_i \subset \sigma(h)$.   As the machine has no memory, these values are stored implicitly in the indexing of the states and the transition functions.

For each value of $n$ with $1 \leq n \leq d$, $n \neq i$, consider the cycle ${\mathcal C}_n = \{C_n,C_n^2,C_n^3 \cdots ,C_n^{k_n}=1\}$ of length $k_n$ where all values are taken $mod \ q$, and $C_n$ is defined in Section \ref{sec:nf}. Create a set of $q^2 k_n$ states of the form $T_{\alpha,e,\sigma}$ where $\alpha, \sigma \in \Z_q$, $e \in \{1,2,3, \cdots,k_n\}$, where
\begin{itemize}
\item $\alpha$ stores the coefficient of the term of $R_n$ that we are reading (denoted $\beta_{n,j}$ above),
\item $e$ is the exponent of $C_n$ in the cycle ${\mathcal C}_n$, and
\item $\sigma$ is the partial sum of the coefficient of $(t+l_i)^{-1}$ in $Q_i$.
\end{itemize}

From state $T_{\alpha,e,\sigma}$, for each $\beta \in \Z_q$ and $\gamma \in \Z_q \cup \{\#\}$, compute $\sigma'$ to be the least residue of $\sigma+\beta C_n^{e+1}(mod \ q)$ (resp. $\sigma+ \beta (-l_i)^{e+1}$ when $n=d$) and introduce a transition labeled ${\beta \choose \gamma}$ to $T_{\beta, e+1,\sigma'}$.  To streamline notation, we always assume that the second coordinate in the state index is reduced $mod \ k_n$, and the third index is reduced $mod \ q$.
Denote the resulting machine $N_n$; note that this step creates $d-1$ distinct finite state automata.  Note that we have not added any start or accept states to this machine yet.

To create the composite machine $M_{{\mathcal H},1}$, begin with a start state with $q^2$ transition arrows emanating from it, with labels ${\beta \choose \gamma}$ for all $\beta,\gamma \in \Z_q$.  The arrow with label ${\beta \choose \gamma}$ terminates at state $T_{\beta,1,b+\beta C_1}$ in $N_1$.

To transition from $N_c$ to $N_{c+1}$ for $c \leq i-2$, create a set of $q$ states $S_{c,0},S_{c,1},S_{c,2}, \cdots S_{c,q-1}$ reflecting in the second coordinate the possible values of the partial sum $\sigma$. From each state $T_{\alpha,e,\sigma}$ of $N_{c}$ introduce a transition with label ${\# \choose \#}$ to the state $S_{c,\sigma}$.  From each state $S_{c,\sigma}$ introduce $q^2$ transitions, where the transition with label ${\beta \choose \gamma}$ terminates at the state $S_{\beta,1,\sigma+\beta C_{c+1}^1}$ of $N_{c+1}$, for each pair $\beta,\gamma \in \Z_q$.  As we pass from $N_c$ to $N_{c+1}$ in this way we are transitioning from reading the coefficients of $R_c$ to the coefficients of $R_{c+1}$ and the information we must retain in terms of the state indexing is the partial sum $\sigma$.

Now add one additional arrow from the start state with label ${\# \choose \#}$ which terminates at state $S_{1,b}$ where $b$ is fixed in the generator $s$.  From each state $S_{c,p}$ add a transition with label ${\# \choose \#}$ which terminates at state $S_{c+1,p}$.  This corresponds to $\Phi_c = \emptyset$ for $c \leq i-2$.

Now we have ``connected" the machines $N_1$ through $N_{i-1}$.  As above, create $q$ states labeled $S_{i-1,0},S_{i-1,1},S_{i-1,2}, \cdots S_{i-1,q-1}$ reflecting the possible values of $\sigma$ in the second coordinate. From each state $T_{\alpha,e,\sigma}$ of $N_{i-1}$ introduce a transition with label ${\# \choose \#}$ to the state $S_{i-1,\sigma}$.  Create $q^2$ states $S_{i,a,b}$ for each pair $a,b \in \Z_q$; the index $a$ stores the value of $\sigma$ and the index $b$ is the coefficient of $(t+l_i)^{-1}$ in $Q_i$ if we began with a pair $g,gs$, otherwise it is the first entry in $\Psi_i$.  We denote this entry by $\psi_{i,1}$.  From state $S_{i-1,c}$ for each $\beta \in \Z_q$ add a transition with label ${\beta \choose \gamma}$ to state $S_{i,c,\gamma}$.  At each state $S_{i,a,b}$ add a loop with label ${a \choose b}$ for $a \in \Z_q \cup \{\#\}$ and $b \in \Z_q$.

Next, create $q$ copies of $N_{i+1}$, which we denote $N_{i+1,p}$ for $p \in \Z_q$.  No transitions will be introduced between copies of $N_{i+1,p}$ and $N_{i+1,p'}$ for $p \neq p'$.  From state $S_{i,\sigma,\psi_{i,1}}$, add a transition arrow with label ${\beta \choose \#}$ to the state $T_{\beta,1,\sigma+\beta C_i}$ of $N_{i+1,\psi_{i,1}}$. The lack of transitions between the $N_{i,p}$ retains the value of $\psi_{i,1}$.

Create $q$ copies of $N_r$ for $i+2 \leq r \leq d$ which we index by $N_{r,p}$ for $p \in \Z_q$, and corresponding transition states $S_{r,p}$ as above.  Mimic the transitions between machines as above, with two changes.  To connect the machines $N_{r,p}$ and $N_{r+1,p}$ via the intermediate states $S_{k,p}$:
\begin{itemize}
\item replace any transition with label ${\# \choose \#}$  from $N_{r,p}$ to $S_{k,p}$ by a set of transitions with labels ${\# \choose \xi}$, for $\xi \in \Z_q$, and
\item replace a transition with label ${a \choose b}$ with $a,b \in \Z_q$ from $S_{k,p}$ to $N_{r+1,p}$ by  set of transitions with labels ${\chi \choose \#}$, for $\chi \in \Z_q$.
\end{itemize}

To finish the construction of the machine $M_{{\mathcal H},1}$ which accepts the language ${\mathcal H}_1$, we must verify that the final sum $\sigma$ is exactly the coefficient $\psi_{i,1}$.  To accomplish this, in $N_{d,p}$ designate only $T_{\alpha,e,p}$ as an accept state.

It follows directly from Lemma \ref{lemma:shift} that the language of convolutions of strings $\otimes(\sigma(g),\sigma(h))$ for which all but the initial coefficients in $\Phi_i$ and $\Psi_i$ are related as in Equation \eqref{eqn:remainingcoeff} form a regular language ${\mathcal H}_2$.
Let ${\mathcal H} = {\mathcal H}_1 \cap {\mathcal H}_2$; then ${\mathcal H}$ is the language of those convolutions where $\Phi_i$ and $\Psi_i$ are related as in $\otimes(\sigma(g),\sigma(gs))$ where $s$ is a type 1 generator. Hence ${\mathcal N}_s' = {\mathcal K}_1 \cap {\mathcal H}$ is a regular language which contains all convolutions of the form $\otimes(\sigma(g),\sigma(gs))$ where $s$ is a type 1 generator.

\subsection{Analysis of coefficients for type 2 generators.}
We now mimic the analysis of the coefficients of $\sigma(R_i)$ and $\sigma(Q_i)$ where $R_i$ and $Q_i$ arise from $\sigma(g)$ and $\sigma(gs)$ and $s$ is a type 2 generator.  Recall that we must convert
$$b(t+l_i)^{-1} + (t+l_i)^{-1}(t+l_j)(R_1 + R_2 + \cdots + R_d)$$
to a Laurent polynomial in the variable $t+l_i$.  As before, the coefficient of $(t+l_i)^{-1}$ in $Q_i$ will be computed as a running sum $\sigma$, of which $b$ will be a summand.

Note that $(t+l_i)^{-1}(t+l_j)=1+(l_j-l_i)(t+l_i)^{-1}$ and the initial $1$ creates the difference between the case when $s$ is a type 1 generator, and this case.  In particular, when we compute $(t+l_i)^{-1}(t+l_j)R_i$ we see that
\begin{equation}\label{eqn:type2i}
\left(1+(l_j-l_i)(t+l_i)^{-1}\right)\sum_{x=1}^{\delta_i} \beta_{i,x} (t+l_i)^{-x} = \sum_{x=1}^{\delta_{i}+1} \tau_x (t+l_i)^{-x}
\end{equation}
where
\begin{itemize}
\item $\tau_1=\beta_{i,1}$,
\item $\tau_k=\beta_{i,k}+(l_j-l_i)\beta_{i,k-1}$ for $2 \leq k \leq \delta_i$, and
\item  $\tau_{\delta_i+1}=(l_j-l_i)\beta_{i,\delta_i}$.
\end{itemize}
We notice immediately that this Laurent polynomial contributes its initial coefficient, $\beta_{i,1}$ to the coefficient $\sigma$ of $(t+l_i)^{-1}$ in $Q_i$, which is not the case when $s$ is a type 1 generator.

We now compute the contribution to $\sigma$ from a generic term in $(1+(l_j-l_i)(t+l_i)^{-1})R_n$ for $n \neq i,d$, which is of the form $(1+(l_j-l_i)(t+l_i)^{-1}) \xi (t+l_n)^{-e}$, for $\xi \in \Z_q$.  Using Equation \eqref{eqn:1} we see that
\begin{align*}
\left(1+(l_j-l_i)(t+l_i)^{-1}\right) \xi (t+l_n)^{-e} &= (1+(l_j-l_i)(t+l_i)^{-1})\xi \sum_{r=0}^{\infty} {-e \choose r} (l_i-l_n)^{-e-r} (t+l_i)^r \\
&= (1+(l_j-l_i)(t+l_i)^{-1})\xi \sum_{r=0}^{\infty} \chi_r (t+l_i)^r
\end{align*}
in which the coefficient of $(t+l_i)^{-1}$ is $(l_j-l_i)\xi \chi_0 = (l_j-l_i) \xi C_n^{e}$.  As this differs from the case when $s$ is a type 1 generator only by a constant, namely $l_j-l_i$, we  see that the identical analysis applies to computing the contribution to $\sigma$ from $(1+(l_j-l_i)(t+l_i)^{-1})R_n$ when $n \neq i,d$.   When $n=d$, we use Equation \eqref{eqn:3} instead of Equation \eqref{eqn:1} to obtain
$$(1+(l_j-l_i)(t+l_i)^{-1}) \xi (t^{-1})^{-e} = (1+(l_j-l_i)(t+l_i)^{-1}) \xi \sum_{r=0}^e {e \choose r} (-l_i)^{e-r} (t+l_i)^n.$$
from this we see that each term of this form contributes $(l_i-l_j) \xi (-l_i)^e$ to the sum $\sigma$, and the constant term $\beta_{d,0}$ contributes $(l_j-l_i) \beta_{d,0}$ to this sum.

Thus the coefficient of $(t+l_i)^{-1}$ in $Q_i$ is
\begin{equation}\label{eqn:firstcoefftype2}
b+ \beta_{i,1} + (l_j-l_i) \left( \sum_{t=1, t \neq i}^{d-1} \sum_{j=1}^{\delta_t} \beta_{t,j} C_t^j + \sum_{j=0}^{\delta_d} \beta_{d,j} (-l_i)^j  \right)
\end{equation}
where the values of $i,j$ and $b$ are determined by the original generator $s$.  This expression is very close to the one in Equation \eqref{eqn:firstcoeff}: there are two initial terms instead of one, and the summation is multiplied by a constant.  The automaton constructed in Section \ref{sec:fsa-type1-n=i} is easily adapted to account for these minor changes in the sum, and we obtain the same conclusions as in Section \ref{sec:type1} but for type 2 generators.

It now follows that the terms of $Q_i$ of degree at most $-2$ are given by the terms of
$$(t+l_i)^{-1}(t+l_j)R_i = (1+(l_j-l_i)(t+l_i)^{-1})R_i =  \sum_{x=1}^{\delta_{i}+1} \tau_x (t+l_i)^{-x}$$
for $\tau_x$ defined in Equation \eqref{eqn:type2i}.  Letting $D_i=l_j-l_i$ we create a simple automaton which accepts strings of the form:
\begin{equation}\label{eqn:coeff}
{ \beta_{i,1} \choose \beta_{i,1}} { \beta_{i,2} \choose D_i \beta_{i,1} + \beta_{i,2}}{ \beta_{i,3} \choose D_i \beta_{i,2} + \beta_{i,3}} \cdots { \beta_{i,\delta_i} \choose D_i \beta_{i,\delta_i-1} + \beta_{i,\delta_i}}{ \# \choose D_i \beta_{i,\delta_i} }
\end{equation}
where all coordinates are commputed modulo $q$.  Create $q^2$ states $T_{a,b}$ for each $a,b \in \Z_q$.  From a start state $S$, add transition arrows with label ${a \choose a}$ terminating at state $T_{a,a}$.  From state $T_{a,b}$, compute $d$ to be the least residue mod $q$ of $aD_i+c$, for each $c \in \Z_q$ and add a transition with label ${c \choose d}$ which terminates at state $T_{c,d}$.  From each state $T_{a,b}$ add a transition with label ${\# \choose aD_i}$ to an accept state.  Then this machine can be easily extended to a machine which verifies that in strings $\Phi=\Phi_1 \# \Phi_2 \# \cdots \# \Phi_d$ and $\Psi=\Psi_1 \# \Psi_2 \# \cdots \# \Psi_d$, all but the initial coefficients of $\Phi_i$ and $\Psi_i$ differ as in expression \eqref{eqn:coeff}.

Thus we have shown that the set of all convolutions $\otimes(\sigma(g),\sigma(h))$ for which the initial coefficients in $\Phi_i$ and $\Psi_i$ are related as in Equation \eqref{eqn:firstcoefftype2} form a regular language, as do the set of convolutions $\otimes(\sigma(g),\sigma(h))$ for which all but the initial coefficients in $\Phi_i$ and $\Psi_i$ are related as in Equation \eqref{eqn:coeff}.  Thus the intersection of these languages is a regular language ${\mathcal H}$.  As when $s$ was a type 1 generator, we conclude that ${\mathcal N}_s' \cap {\mathcal H}$ is a regular language which contains all convolutions of the form $\otimes(\sigma(g),\sigma(gs))$ where $s$.

Regardless of whether $s$ is a type 1 or type 2 generator, to complete the proof of Theorem \ref{thm:mult-lang} we must show that if ${\mathcal N}_s={\mathcal P}_s{\mathcal N}_s'$, then ${\mathcal L}_s = {\mathcal N}_s$, where ${\mathcal L}_s$ is the multiplier language for the generator $s$.   It is clear that ${\mathcal K}_2 \subset {\mathcal N}_s$.  We now prove the reverse inclusion.

Let $\otimes(p_1,p_2) \in {\mathcal P}_s$ and
$$\otimes(\Phi_1 \# \Phi_2 \# \cdots \# \Phi_d,\Psi_1 \# \Psi_2 \# \cdots \# \Psi_d) \in {\mathcal N}_s'.$$
Suppose that $g$ (resp. $h$) in $\GG$ has $\pi(g) = p_1$ (resp. $\pi(h) = p_2$) and $\sigma(g) = \Phi_1 \# \Phi_2 \# \cdots \# \Phi_d$ (resp. $\sigma(h) = \Psi_1 \# \Psi_2 \# \cdots \# \Psi_d$). We will show that $h=gs$.

We can write $g$ and $h$ in matrix form, where the entries of $p_1$ and $p_2$, respectively,  determine the exponents in the upper left entry of each matrix.  Let $p_1 = (m_1,m_2, \cdots ,m_{d-1})$ and $p_2 = (n_1,n_2, \cdots ,n_{d-1})$.  We know that these strings differ in the manner proscribed by ${\mathcal P}_s$ which corresponds to multiplication by $s$.  The upper right entry of the matrix representing $g$ is then
$\Pi_{n=1}^{d-1}(t+l_n)^{m_n}(\Phi_1 + \Phi_2 + \cdots + \Phi_d)$.  We construct an analogous polynomial using $p_2$ and  the $\Psi_n$ for the upper right entry in $h$.

Now consider the matrix for the element $gs$.  Since $\otimes(p_1,p_2) \in {\mathcal P}_s$, we must have $\pi(gs) = (n_1,n_2, \cdots ,n_d) = \pi(h)$.  If the polynomial  in the upper right entry of $gs$ is denoted $P$, use the Decomposition Lemma to write
$$\Pi_{k=1}^{d-1}(t+l_k)^{-n_k}P = P_1+P_2+ \cdots + P_d.
$$
In order to show that $h=gs$, we must verify that the polynomial entries are also the same.   To see this, first note that the values of the $\delta_n$ are encoded in the lengths of the substrings of $\sigma(g)$, $\sigma(h)$ and $\sigma(gs)$, and hence we know that the minimal degrees of the $Q_n$ (arising from $h$) and $P_n$ (arising from $gs$) are identical for all $n$.

Suppose that $s$ is a type 1 generator, and $n \neq i,d$.  We will show that $Q_n=P_n$.  From Table \ref{fig:type1chart} we see that the value of $\beta_{\delta_n}$ in $R_n$ determines the coefficient of the minimal degree term in both $Q_n$ and $P_n$, since both $\otimes(g,h)$ and $\otimes(g,gs)$ are accepted strings.  Hence the coefficient of minimal degree in these two polynomials is identical.  The following equations determine the coefficients of the polynomials $P_n$ for $n \neq i$ in increasing order of degree, namely

 \[\gamma_k =    \left\{
\begin{array}{ll}
 -C_n\gamma_{k+1} + C_n \beta_k & \text{if $ n \neq i,d$} \\
 \\
  -l_i \gamma_{k+1} + \alpha_1' \beta_{k+1} & \text{if  $n=d$}
\end{array}
\right. \]
and hence the remaining coefficients of both $Q_n$ and $P_n$ are determined by the coefficients of $R_n$ and the coefficients of higher degree in each polynomial, which are identical. Thus these two polynomials are identical.  This reasoning can be adapted both to the case $n=d$ and to type 2 generators using the following formulae from Section \ref{subsec:type2}.

 \[\gamma_k =    \left\{
\begin{array}{ll}
 -C_n\gamma_{k+1} + C_n D_n\beta_k & \text{if $ n \neq i,j,d$} \\
 \\
 -C_j\gamma_{k+1}-\beta_{k+1} & \text{if  $n=j$} \\
 \\
  -l_i \gamma_{k+1} + \sigma_0' \beta_{k} & \text{if  $n=d$}
\end{array}
\right. \]

It remains to verify that $Q_i=P_i$.  When $s$ is a type 1 generator, the coefficients of the terms  of degree less than $-1$ in both $Q_i$ and $P_i$ are simply a translate of the coefficients of $R_i$, hence identical. When $s$ is a type 2 generator, the coefficients of the terms  of degree less than $-1$ in both $Q_i$ and $P_i$ are uniquely determined by the coefficients of $R_i$, and hence identical.  Regardless of the type of generator, Equations \eqref{eqn:firstcoeff} and \eqref{eqn:firstcoefftype2} demonstrate that the initial coefficient of $P_i$ and $Q_i$ only depends on the entries of $\sigma(g)$ and hence must be identical.  Hence $h=gs$ and it follows that ${\mathcal N}_s = {\mathcal L}_s$.  This finishes the proof of Theorem \ref{thm:mult-lang} that the multiplier languages are regular for each generator $s \in S_{d,q}$, and we conclude that $\GG$ is graph automatic.

\bibliographystyle{plain}
\bibliography{refs}

\end{document}